\documentclass[oneside,british,english]{amsart}
\usepackage[T1]{fontenc}
\usepackage[latin9]{inputenc}
\usepackage{mathrsfs}
\usepackage{amsthm}
\usepackage{amsbsy}
\usepackage{amssymb}
\usepackage{setspace}
\PassOptionsToPackage{normalem}{ulem}
\usepackage{ulem}

\makeatletter
\numberwithin{equation}{section}
\numberwithin{figure}{section}
\theoremstyle{plain}
\newtheorem{thm}{\protect\theoremname}
  \theoremstyle{definition}
  \newtheorem{defn}[thm]{\protect\definitionname}
  \theoremstyle{remark}
  \newtheorem{rem}[thm]{\protect\remarkname}
  \theoremstyle{remark}
  \newtheorem{claim}[thm]{\protect\claimname}
  \theoremstyle{plain}
  \newtheorem{lem}[thm]{\protect\lemmaname}
  \theoremstyle{plain}
  \newtheorem{cor}[thm]{\protect\corollaryname}


\theoremstyle{plain}
\newtheorem*{conv}{Convention}

\makeatother

\usepackage{babel}
  \addto\captionsbritish{\renewcommand{\claimname}{Claim}}
  \addto\captionsbritish{\renewcommand{\corollaryname}{Corollary}}
  \addto\captionsbritish{\renewcommand{\definitionname}{Definition}}
  \addto\captionsbritish{\renewcommand{\lemmaname}{Lemma}}
  \addto\captionsbritish{\renewcommand{\remarkname}{Remark}}
  \addto\captionsbritish{\renewcommand{\theoremname}{Theorem}}
  \addto\captionsenglish{\renewcommand{\claimname}{Claim}}
  \addto\captionsenglish{\renewcommand{\corollaryname}{Corollary}}
  \addto\captionsenglish{\renewcommand{\definitionname}{Definition}}
  \addto\captionsenglish{\renewcommand{\lemmaname}{Lemma}}
  \addto\captionsenglish{\renewcommand{\remarkname}{Remark}}
  \addto\captionsenglish{\renewcommand{\theoremname}{Theorem}}
  \providecommand{\claimname}{Claim}
  \providecommand{\corollaryname}{Corollary}
  \providecommand{\definitionname}{Definition}
  \providecommand{\lemmaname}{Lemma}
  \providecommand{\remarkname}{Remark}
\providecommand{\theoremname}{Theorem}

\newcount\skewfactor
\def\mathunderaccent#1#2 {\let\theaccent#1\skewfactor#2
\mathpalette\putaccentunder}
\def\putaccentunder#1#2{\oalign{$#1#2$\crcr\hidewidth
\vbox to.2ex{\hbox{$#1\skew\skewfactor\theaccent{}$}\vss}\hidewidth}}
\def\name{\mathunderaccent\tilde-3 }
\def\Name{\mathunderaccent\widetilde-3 }

\usepackage[hidelinks]{hyperref}
\begin{document}
\makeatletter\def\shfiuwefootnote{\gdef\@thefnmark{}\@footnotetext}\makeatother\shfiuwefootnote{Version 2023-01-15. See \url{https://shelah.logic.at/papers/1085/} for possible updates.}

\title{Generalizing Random Real Forcing for Inaccessible Cardinals}

\author{Shani Cohen and Saharon Shelah}

\date{January 12, 2023}

\subjclass[2000]{Primary 03E35; Secondary: 03E55.}

\keywords{Set theory, Forcing, Random Real, Inaccessible, The Null Ideal.}

\thanks{Partially sponsored by the European Research Council grant 338821.}

\thanks{Publication 1085 on the second author's list.}
\begin{abstract}
The two classical parallel concepts of ``small'' sets of the real
line are meagre sets and null sets. Those are equivalent to Cohen
forcing and Random Real forcing for $^{\mathbb{N}}\mathbb{N}$; in
spite of this similarity, the Cohen forcing and Random Real forcing
have very different shapes. One of these differences is in the fact
that the Cohen forcing has an easy natural generalization for $^{\lambda}2$
for regular $\lambda>\aleph_{0}$, corresponding to an extension for
the meagre sets, while the Random Real forcing didn't seem to have
a natural generalization, as Lebesgue measure doesn't have a generalization
for space $2^{\lambda}$ while $\lambda>\aleph_{0}$. The work \cite{Sh:1004}
found a forcing resembling the properties of Random Real forcing for
$2^{\lambda}$ while $\lambda$ is a weakly compact cardinal. Here
we describe, with additional assumptions, such a forcing for $2^{\lambda}$
while $\lambda$ is just an Inaccessible Cardinal; this forcing is
strategically $<\lambda$- complete and satisfies the $\lambda^{+}$-
c.c hence preserves cardinals and cofinalities, however unlike Cohen
forcing, does not add an undominated real.
\end{abstract}

\maketitle
\selectlanguage{british}%
\begin{onehalfspace}
{\large{}\newpage{}}{\large \par}
\end{onehalfspace}

\selectlanguage{english}%

\section*{Introduction}

\medskip
\medskip

There are two classical ways of defining what is a small set of the
real line $^{\omega}2$; the topological definition of a small set
is a \uline{meagre} set, which is a countable union of nowhere
dense sets. The second definition uses measure and defines a set to
be small if it is a \uline{null} set, which means that it has Lebesgue
measure zero.

Both the collection of meagre sets and the collection of null sets
are ideals in the set $^{\omega}2$; the forcing modulo the ideal
of meagre sets is the \uline{Cohen Forcing} while the forcing modulo
the ideal of null sets is \uline{Random Real forcing} \cite{Sh:CR}.

Looking at $\lambda$- reals for $\lambda>\aleph_{0}$, so elements
of the set:

$^{\lambda}2=\{\eta:\eta\mbox{ is a sequence of }0\mbox{'s and }1\mbox{'s of length }\lambda\}$,
there is a natural extension to a Cohen Forcing; that would be a forcing
modulo sets that are $\lambda$- meagre \cite{Jech}. Unlike this
case, Lebesgue measure has no natural extension in $^{\lambda}2$
for regular cardinals $\lambda>\aleph_{0}$, thus there is no generalization
of Random Real forcing for those cardinals.

An important and useful property of Random Real forcing is not adding
a function that is undominated; recall that Cohen Forcing adds $f:\lambda\rightarrow\lambda$
not smaller (meaning, modulo finite set) than any real in the ground
model (where $\lambda$-reals here are functions $\lambda\rightarrow\lambda$,
i.e. members of $^{\lambda}\lambda$). However Random Real forcing
has the property that every ``new'' real (i.e. every element of
$^{\omega}\omega$) is bounded by a real in the ground model. One
of the uses of this property is for cardinal invariants; the bounding
number $\mathfrak{d}$ \cite{Hal} does not change after forcing with
Random Real forcing.

In the paper \cite{Sh:1004}, the second author described a generalization
of the null ideal (meaning, the ideal of Lebesgue measure zero sets)
for a weakly compact cardinal $\lambda$; that was done by constructing
a forcing that has the properties of Random Real forcing in $2^{\lambda}$
for a weakly compact $\lambda$; this result is surprising since there
is no clear similarity in the definition of the forcing in \cite{Sh:1004}
and Random Real forcing.

By ``having the properties of Random Real forcing'' we mean a forcing
for which: (1) the $\lambda^{+}$- chain condition holds and (2) the
forcing is strategically $<\lambda$- complete; by those conditions
it follows that the forcing preserves cardinals and cofinalities when
$\lambda=\lambda^{<\lambda}$. Moreover, any new real added in the
forcing shall be bounded by a real in the ground model, that will
be condition (3): the forcing is $\lambda$- bounding. An additional
important property is symmetry, but it fails by \cite{Sh:1004}.

The purpose of this work is to find a forcing as in \cite{Sh:1004}
for Mahlo, and even any inaccessible cardinal (therefore may be smaller
than the first weakly compact cardinal). In section \ref{Mahlo} we
shall describe a construction for which the properties of Random Real
forcing [\ref{trullymaindef}] hold for any \uline{inaccessible} and in particular
\uline{Mahlo cardinal}; those are cardinals whose existence is
a weaker condition than the existence of a weakly compact cardinal
\cite{Kan}. However compare to \cite{Sh:1004} we need some parameter
$X\subseteq\lambda$ so the definition is not ``pure'' as in \cite{Sh:1004}.

An additional difference from \cite{Sh:1004} is that the large cardinal
property is not enough. We shall assume the existence of a stationary
set that reflects only in inaccessibles and has a diamond sequence.
Note that this demand can be gotten by an easy forcing \cite{CFM}
and if $V=L$ this is equivalent to not being weakly compact. For
a Mahlo cardinal there is a stationary set of inaccessible cardinals
below it so in particular this set reflects only in inaccessibles
and then we still need to assume the existence of diamond sequence
for it. In \cite{Sh:1004}, the main use of the weak compactness was
by reflecting a maximal antichain of conditions to a maximal antichain
in a corresponding forcing for a smaller cardinal; the purpose of
the diamond sequence here will be to overcome this inability.

Furthermore, for convenience we shall assume that the conditions of
the desired forcing are trees that are pruned only in levels of the
stationary set (we demand the stationary set to contain only limit
ordinals). However it is possible to allow pruning in successor levels;
e.g. as long as the pruning is only of a bounded set, when we use
a tree with splitting to $\theta_{\epsilon}=\mathrm{cf}(\theta_{\epsilon}) \in \big[|\epsilon|^{+},\lambda \big)$.

We may like to make our forcing $<\lambda$- complete (rather than
strategically $<\lambda$- complete); this is not clear.

This work is a part of what was promised in \cite{Sh:1004}, the ideas
of the construction where stated in Rutgers in 2011.

We intend to deal later with accessible $\lambda=\lambda^{<\lambda}>\aleph_{0}$,
(under reasonable condition); also we can use $|\epsilon|^{+}$- complete
$D_{\epsilon}$ filter on $\theta_{\epsilon}$ (or $D_{\eta}$ on
$\mathrm{suc}_{\boldsymbol{T}}(\eta)$ when $\lg(\eta)=\epsilon$, as in \cite{Sh:1004},
or Remark \ref{remarkfilter} below).

We also continue \cite{Sh:1004} in \cite{Sh:1100} in work with T. Baumhauer and M. Goldstern \cite{BauGolShe} and in \cite{Sh:E82}.

\noindent
\textbf{Acknowledgements.} We thank Andrzej Ros\l{}anowski for encouragement for finishing the paper
and the referee for doing a wonderful job.

Notation:
\begin{enumerate}
\item we use $\alpha,\beta,\gamma,\delta,\epsilon$ to denote ordinals,
\item we use $\lambda,\mu,\kappa$ to denote cardinals,
\item $^{B}\!A$ denotes the set of functions from $B$ to $A$.
\end{enumerate}
\selectlanguage{british}%
\begin{onehalfspace}
{\large{}\newpage{}}{\large \par}
\end{onehalfspace}

\tableofcontents{}

\begin{onehalfspace}
{\large{}\newpage{}}{\large \par}
\end{onehalfspace}

\selectlanguage{english}%

\section{Preliminaries}

The paper \cite{Sh:1004} showed a method of finding, for a weakly
compact cardinal $\lambda$, a forcing that generalizes the properties
of a Random Real forcing for $\aleph_{0}$. In section \ref{Mahlo}
we add the assumption of a diamond principle and then see a similar
forcing that generalizes the same properties for inaccessible cardinals,
with the assumption that there exists a stationary set that reflects
only in inaccessibles; so in particular for Mahlo cardinals it follows.
Here we show some general definitions that will be used throughout
this paper.
\begin{defn}
\label{def:A-forcing-resembling}A forcing resembling Random Real
forcing for a regular cardinal $\lambda=\lambda^{<\lambda}$ will
be a forcing for which the following conditions hold:
\begin{enumerate}
\item The forcing is not trivial and the $\lambda^{+}$- chain condition
holds.
\item The forcing is $<\lambda$- strategically complete.
\item \label{forcingisbounding}The forcing is $\lambda$- bounding.
\item The forcing does not add $\lambda$- Cohen reals (follows from \ref{forcingisbounding}).
\end{enumerate}
\end{defn}
This definition reflects the properties of Random Real forcing in
the case of $\lambda=\aleph_{0}$.
\begin{rem}
At this point we shall ignore another desired property, symmetry.
This property states that for all $\eta_{1},\eta_{2}$ and a model
$M$: $\eta_{1}$ is generic over $M$ and $\eta_{2}$ is generic
over $M[\eta_{1}]$ if and only if $\eta_{2}$ is generic over $M$
and $\eta_{1}$ is generic over $M[\eta_{2}]$. By \cite{Sh:1004}
it fails.
\end{rem}
Now we can define the terms used for definition \ref{def:A-forcing-resembling}:
\begin{defn}
Let $\alpha$ be an ordinal.
\begin{enumerate}
\item For a forcing notion $\mathbb{P}$ and condition $p\in\mathbb{P}$,
we define a game $\Game_{\alpha}(p,\mathbb{P})$ as follows. A play
of the game has $\alpha$ moves and for $\beta<\alpha$ first the
player COM chooses a condition $p_{\beta}\in\mathbb{P}$ such that:

\begin{enumerate}
\item $p\leq p_{\beta}$.
\item For all $\gamma<\beta$ it holds that $q_{\gamma}\leq p_{\beta}$.
\end{enumerate}

Next, the player INC plays and chooses $q_{\beta}\in\mathbb{P}$ such
that $p_{\beta}\leq q_{\beta}$.

The player \uline{COM wins the game} if she survived; i.e. had
a legal move for all $\beta<\alpha$.

\item A forcing $\mathbb{P}$ is said to be \uline{strategically complete
in $\alpha$} (or $\alpha$- strategically complete) if for all $p\in\mathbb{P}$
it holds that in the game $\Game_{\alpha}(p,\mathbb{P})$ between
players COM and INC, player COM has a winning strategy.
\end{enumerate}
\end{defn}

\begin{defn}
A forcing $\mathbb{P}$ is \uline{$<\lambda$- strategically complete}
if it is $\alpha$- strategically complete for all $\alpha<\lambda$.
\end{defn}

\begin{defn}
For a cardinal $\lambda$, a forcing $\mathbb{P}$ will be called
\uline{$\lambda$-bounding} when the following holds: $$\Vdash_{\mathbb{P}} \big(\forall f:\lambda\rightarrow\lambda \big) \big(\exists g\in({}^{\lambda}\lambda)^{V} \big) \big(\forall\alpha<\lambda \big) \big[ f(\alpha)\leq g(\alpha) \big]$$
\end{defn}

\begin{defn}
A set of ordinals $S$ will be called \uline{tenuous} (or ``nowhere
stationary'' as in \cite{Sh:1004}) if for each ordinal $\delta$
of uncountable cofinality, the set $S\restriction\delta$ is not a
stationary set in $\delta$.
\end{defn}

\begin{defn}
Let $\lambda$ be a cardinal and $S\subseteq\lambda$ a stationary
set of $\lambda$. Then $S$ is said to be \uline{non-reflecting}
when for each ordinal $\delta<\lambda$ of cofinality $>\aleph_{0}$
the set $S\restriction\delta$ is not stationary in $\delta$.\end{defn}
\begin{rem}
\label{stationarynotreflecting}Let $\lambda$ be a cardinal and
let $S_{\ast}$ be a non-reflecting stationary subset of $\lambda$.
Then the set $S\subseteq S_{\ast}$ is tenuous if and only if $S$
is not stationary.
\begin{claim}
\label{stationaryunions}Let $\lambda$ be a cardinal and $S_{\ast}$
be a non-reflecting stationary subset of $\lambda$.
\begin{enumerate}
\item \label{unionnonstat}If $\bar{S}=\langle S_{i}:i<i(\ast)\rangle$
is such that for all $i<i(\ast)$, $S_{i}\subseteq\lambda$ is a non-stationary
with $i(\ast)<\mathrm{cf}(\lambda)$; \uline{then} $S=\bigcup\limits_{i<i(\ast)}S_{i}$
is not stationary.
\item \label{unionnonten}If $\bar{S}=\langle S_{i}:i<i(\ast)\rangle$
is such that for all $i<i(\ast)$, $S_{i}\subseteq S_{\ast}$ is a
tenuous set with $i(\ast)<\mathrm{cf}(\lambda)$; \uline{then} $S=\bigcup\limits_{i<i(\ast)}S_{i}$
is tenuous.
\end{enumerate}
\end{claim}
\end{rem}
\begin{proof}
We see:
\begin{enumerate}
\item For each $i<i(\ast)$, there is a club $E_{i}$ such that $S_{i}\cap E_{i}=\varnothing$
(as $S_{i}$ is not stationary); so let $E=\bigcap\limits_{i<i(*)}E_{i}$.
$E$ is a club in $\lambda$, as the intersection of $i(\ast)<\mathrm{cf}(\lambda)$
clubs. In addition, $S\cap E=\varnothing$, thus $S$ is not stationary.
\item From clause \ref{unionnonstat}, $S$ is not stationary. In addition
for each $\alpha<\lambda$, $S_{\ast}\cap\alpha$ is non-stationary
hence so does $S\cap\alpha$, as a subset of it.
\end{enumerate}
\end{proof}
\selectlanguage{british}%
\begin{onehalfspace}
{\large{}\newpage{}}{\large \par}
\end{onehalfspace}

\selectlanguage{english}%

\section{\label{Mahlo}New $\lambda$-Real for Inaccessible Cardinal $\lambda$}

To find a forcing resembling Random Real forcing for Mahlo Cardinal,
we need to add an additional assumption to those of the weakly compact
cardinal case in \cite{Sh:1004}; the new assumption will be a diamond
sequence indexed on a stationary set of inaccessible cardinals (a
stationary set of inaccessibles exists for a Mahlo Cardinal). For
the more general case of any Inaccessible Cardinal, there is still
a need to assume the existence of a diamond sequence; however here
it will be indexed on a stationary set that only reflects in inaccessible
cardinals. Those two cases are unified here, dealing with an Inaccessible
Cardinal with a stationary set that only reflects in inaccessible
cardinals; a Mahlo Cardinal will be a special case of this.

\subsection{Useful Definitions}
\begin{defn}
\label{finestructure}A \uline{good structure} $\mathfrak{r}$
contains:
\begin{enumerate}
\item An inaccessible cardinal $\lambda=\lambda_{\mathfrak{r}}$.
\item A stationary set $S_{\ast}=S_{\ast}^{\mathfrak{r}}\subseteq\lambda$
of strong limit cardinals, such that if $S_{\ast}\cap\delta$ is stationary
in $\delta$ then $\delta$ is inaccessible.
\item An increasing sequence of cardinals $\bar{\theta}=\bar{\theta}_{\mathfrak{r}}=\langle\theta_{\epsilon}:\epsilon<\lambda\rangle$
such that for all $\epsilon<\lambda$: $2\leq\theta_{\epsilon}<\lambda$
and if $\epsilon\in S_{\ast}$ then for all $\zeta<\epsilon$, $\theta_{\zeta}<\epsilon$.
\item \label{diamondsequence}We assume the diamond principle for $S_{\ast}$,
$\Diamond_{S_{\ast}}$, and let $\bar{X}=\bar{X}_{\mathfrak{r}}$
be a sequence witnessing it, i.e. $\bar{X}=\langle X_{\delta}:\delta\in S_{\ast}\rangle$;
$X_{\delta}\subseteq\mathscr{H}(\lambda)$.
\end{enumerate}
\end{defn}
\begin{rem}
Observe:
\begin{enumerate}
\item For $\lambda$ Mahlo there is a stationary set $S_{\ast}\subseteq\lambda$
that only contains inaccessible cardinals, thus in particular its
reflection will only be in inaccessible cardinals.
\item For $\lambda$ inaccessible that isn't Mahlo, a non-reflecting stationary
set can be added by a forcing that uses initial segments as in \cite{CFM}.
\item It is possible to assume that $S_{\ast}$ is a set of just limit ordinals
(maybe not strong limit) and the only difference will be that for
all $\delta\in S_{\ast}$ the forcing $\mathbb{Q}_{\delta}$ (will
be defined later) will have the $|\boldsymbol{T}_{\!<\delta}|^{+}$-
chain condition rather than the $\delta^{+}$- chain condition as
we have here; however the forcing $\mathbb{Q}_{\lambda}$ will still
have the $\lambda^{+}$- chain condition. 
\item Concerning \ref{finestructure}(\ref{diamondsequence}), the standard
phrasing of $\Diamond_{S}$ is ``there is $\langle A_{\alpha}:\alpha\in S\rangle$,
$A_{\alpha}\subseteq\alpha$ such that for every $A\subseteq\alpha$
the set $\{\alpha\in S:A\cap\alpha=A_{\alpha}\}$ is a stationary
subset of $\lambda$''. However given a sequence as above, and $h$
an one-to-one function from $\lambda$ onto $\mathscr{H}(\lambda)$
(they are of the same cardinality because $\lambda=\lambda^{<\lambda}$
follows from ``$\lambda$ is inaccessible''). Let $E=\{\mu<\lambda:h\mbox{ maps }\mathscr{H}(\mu)\mbox{ onto }\mu\}$.
It is a club of $\lambda$ because $\mu<\lambda\Rightarrow2^{\mu}<\lambda$.
Lastly let $X_{\delta}\subseteq\delta$ be $\{\alpha<\delta:h(\alpha)\in A_{\delta}\}$,
easily it is as required. Inversely, starting from the $X_\delta$-s we can choose suitable $A_\delta$-s.
\end{enumerate}
\end{rem}

\begin{rem}
When $S_{\ast}$ is non-reflecting, the proofs are simpler.
\end{rem}

Next, the forcing will be defined in several steps; those will be
tree forcings for each $\delta\in S_{\ast}\cup\{\lambda\}$. First,
we shall define the ``biggest'' forcing $\mathbb{Q}_{\delta}^{0}$;
later we will define two additional forcing $\mathbb{Q}_{\delta}\subseteq\mathbb{Q}_{\delta}'\subseteq\mathbb{Q}_{\delta}^{0}$.
For each of those forcing the forcing relation will be of inverse
inclusion.

\begin{defn}
Given a good structure $\mathfrak{r}$, we shall define for each $\alpha\leq\lambda$
the collection of vertices of level $\alpha$: $\boldsymbol{T}_{\alpha}=\prod\limits_{\epsilon<\alpha} \theta_{\epsilon}$;
for $\alpha\leq\lambda$ we will define the complete tree up to $\alpha$
to be the union of those sets: $\boldsymbol{T}_{<\alpha}=\bigcup\{\boldsymbol{T}_{\beta}:\beta<\alpha\}$.\end{defn}
\begin{rem}
We assume we have a good structure $\mathfrak{r}$ until the end of
section \ref{Mahlo}.
\end{rem}

\begin{conv}We let:
\begin{enumerate}
    \item For all $\delta_{1}<\delta\leq\lambda$ and $\nu\in\boldsymbol{T}_{\delta}$ let $\nu\restriction\delta_{1}$ the restriction of $\nu$ to $\delta_{1}$.
    
    \item For each $\delta\in S_{\ast}\cup\{\lambda\}$ and a set $u\subseteq\boldsymbol{T}_{\!<\delta}$ we write $$\lim_{\delta}(u)=\{\nu\in\boldsymbol{T}_{\delta}:\forall\alpha<\delta,\ \nu\restriction\alpha\in u\}.$$
    
    \item For all $\delta\leq\lambda$ and a set $u\subseteq\boldsymbol{T}_{\!<\delta}$, for $\delta_{1}<\delta$ we shall write $u\restriction\delta_{1}=u\cap\boldsymbol{T}_{<\delta_{1}}$.
    
    \item Assume $\alpha<\delta$ and $u\subseteq\boldsymbol{T}_{<\alpha}$ is a tree: non-empty set closed under taking initial segments. Let $\eta\in u$ be some node; we write $$u^{[\eta]}=\{\nu\in u:\eta\trianglelefteq\nu\vee\nu\triangleleft\eta\}.$$
\end{enumerate}
\end{conv}
\begin{defn}
\label{veryfirstforcing}We can now define the forcing $\mathbb{Q}_{\delta}^{0}$
for each $\delta\in S_{\ast}\cup\{\lambda\}$.
\begin{enumerate}
\item \label{veryfirstforcing1}A condition in the forcing will be a
tree $p\subseteq\boldsymbol{T}_{\!<\delta}$, such that:

\begin{enumerate}
\item \label{veryfirstforcinga}There is a trunk $\mathrm{tr}(p)$; this is the
unique element $\eta\in p$ with the following properties:

\begin{enumerate}
\item \label{beingatrunk(i)}For all $\nu\in p$ it holds that $\nu\trianglelefteq\eta$
or $\eta\trianglelefteq\nu$.
\item For every $\eta'$ with the property \ref{beingatrunk(i)}, we
have that $\eta'\trianglelefteq\eta$.
\end{enumerate}
\item \label{veryfirstforcingb}For each $\eta\in p$ there is a $\nu\in\lim_{\delta}(p)$
with $\eta\triangleleft\nu$.
\item \label{veryfirstforcingc}For $\mathrm{tr}(p)\trianglelefteq\eta\in p$;
$\{j\in\theta_{\lg(\eta)}:\eta\overset{\frown}{}\langle j\rangle\in p\}=\theta_{\lg(\eta)}$.
\item \label{veryfirstforcingd}The set 
$$S_{p} = \big\{\delta_{1} \in \big( \lg(\mathrm{tr}(p)),\delta \big) : \lim_{\delta_{1}}(p\restriction\delta_{1}) \not\subseteq p \big\}$$
is tenuous subset of $S_{\ast}$; we call this set the witness set.
\end{enumerate}
\item For all $p,q\in\mathbb{Q}_{\delta}^{0}$ we say that $p\leq q$ if
and only if $p\supseteq q$.
\end{enumerate}
\end{defn}

\begin{rem}
We can think of a tree $p\in\mathbb{Q}_{\delta}^{0}$ for $\delta\in S_{\ast}\cup\{\lambda\}$
as a complete tree from the level $\lg(\mathrm{tr}(p))$ up that we are pruning:
in successor levels we are not allowed to prune. On limit levels we
are allowed to prune the tree only if the level is an ordinal in $S_{p}$,
so in most limit levels we take all the limits while in stages in
$S_{p}$ we are allowed to cut as much as we want as long as \ref{veryfirstforcingb}
holds; so there will be a continuation to each node in each level
higher than its length.
\end{rem}

\begin{rem}
\label{remarkfilter}An alternative definition can be such that in
successor levels there might be prunings, as long as those are not
too big, that is, $\{j\in\theta_{\lg(\eta)}:\eta\overset{\frown}{}\langle j\rangle\notin p\}$
is bounded in $\theta_{\lg(\eta)}$ (when $\mathrm{cf}(\theta_{\lg(\eta)})>\lg(\eta)$)
or even belong to $D_{\eta}$, a $|\lg(\eta)|^{+}$- complete filter
on $\theta_{\lg(\eta)}$. Not a serious difference.
\end{rem}

\begin{claim}
\label{generalstuff}For all $\delta\in S_{\ast}\cup\{\lambda\}$,
the forcing $\mathbb{Q}_{\delta}^{0}$ has the following properties:
\begin{enumerate}
\item \label{generalstuff1}The whole tree $\boldsymbol{T}_{\!<\delta}\in\mathbb{Q}_{\delta}^{0}$
and is weaker than any other condition in the forcing $\mathbb{Q}_{\delta}^{0}$.
\item \label{generalstuff2}If $p\in\mathbb{Q}_{\delta}^{0}$ and $\eta\in p$,
then $p^{[\eta]}\in\mathbb{Q}_{\delta}^{0}$ and $p\leq_{\mathbb{Q}_{\delta}^{0}}p^{[\eta]}$.
\item \label{generalstuff3}Let $\epsilon<\delta$; then the set $\{(\boldsymbol{T}_{\!<\delta})^{[\eta]}:\eta\in\boldsymbol{T}_{\epsilon}\}$
is a maximal antichain of the forcing $\mathbb{Q}_{\delta}^{0}$.
\item \label{generalstuff4}Let $p\in\mathbb{Q}_{\delta}^{0}$ and $\epsilon<\delta$,
then $\{p^{[\eta]}:\eta\in p\cap\boldsymbol{T}_{\epsilon}\}$ is a
maximal antichain above $p$ (if $\epsilon\leq\lg(\mathrm{tr}(p))$, $p=p^{[\eta]}$
and this set is a singleton).\end{enumerate}
\begin{proof}
Let $\delta\in S_{\ast}\cup\{\lambda\}$:
\begin{enumerate}
\item Trivial.
\item Trivial.
\item Let $\epsilon<\delta$; the set is an antichain since for any $\eta\neq\nu\in\boldsymbol{T}_{\epsilon}$,
clearly $(\boldsymbol{T}_{\!<\delta})^{[\eta]}$ and $(\boldsymbol{T}_{\!<\delta})^{[\nu]}$
are not compatible. Let $p\in\mathbb{Q}_{\delta}^{0}$ and let $\eta\in p\cap\boldsymbol{T}_{\epsilon}$
be a node. There exists such a node recalling clauses \ref{veryfirstforcinga}
and \ref{veryfirstforcingb} of definition \ref{veryfirstforcing},
then $p^{[\eta]}\in\mathbb{Q}_{\delta}^{0}$ by previous clause; $p\leq_{\mathbb{Q}_{\delta}^{0}}p^{[\eta]}$
and clearly $(\boldsymbol{T}_{\!<\delta})^{[\eta]}\leq_{\mathbb{Q}_{\delta}^{0}}p^{[\eta]}$;
thus $\{(\boldsymbol{T}_{\!<\delta})^{[\eta]}:\eta\in\boldsymbol{T}_{\epsilon}\}$
is a maximal antichain in $\mathbb{Q}_{\delta}^{0}$. 
\item Similar.
\end{enumerate}
\end{proof}
\end{claim}
Next, define a structure that will fulfill the roll of using the diamond
principle; the structure will be a collection of objects that contain
elements that are antichains with additional properties. In the weakly
compact case \cite{Sh:1004} there was an important roll for the maximal
antichains; in the proof of the $\lambda$-bounding of the forcing
there was a maximal antichain that reflected to an antichain in the
forcing corresponding to a smaller cardinal.

In the inaccessible case which we are dealing with here, we will have
to use diamond to gain a similar property. Each element is an antichain
in the forcing $\mathbb{Q}_{\delta}^{0}$.
\begin{defn}
\label{Chi}For any ordinal $\delta\in S_{\ast}\cup\{\lambda\}$,
$\Xi_{\delta}$ will be the collection of objects $\bar{q}$, for
which the following conditions hold:
\begin{enumerate}
    \item $\bar{q}=\langle q_{\eta}:\eta\in\Lambda\rangle$,
    
    \item $\Lambda\subseteq\boldsymbol{T}_{\!<\delta}$,
    
    \item for each $\eta\in\Lambda$ it holds that $q_{\eta}\in\mathbb{Q}_{\delta}^{0}$ and $\eta=\mathrm{tr}(q_{\eta})$,
    
    \item if $\eta,\nu\in\Lambda$ and $\eta\neq\nu$ then $\eta=\mathrm{tr}(q_{\eta})\notin q_{\nu}\vee\nu=\mathrm{tr}(q_{\nu})\notin q_{\eta}$,
    
    \item the union of all the conditions from the set will be an element in the forcing: $r_{\bar{q}}^{\ast}=\{\rho\in\boldsymbol{T}_{\!<\delta}:(\exists\eta\in\Lambda)(\rho\in q_{\eta}) \} \in\mathbb{Q}_{\delta}^{0}$.
\end{enumerate}

\begin{defn}
For all $\delta\in S_{\ast}\cup\{\lambda\}$ and $\bar{q}\in\Xi_{\delta}$,
we define the \uline{coder} $X_{\bar{q}}$: $X_{\bar{q}}=\{(\eta,\nu):(\eta\in\Lambda)\wedge(\nu\in q_{\eta})\}\subseteq\mathscr{H}(\delta)$.
\begin{defn}
\label{weaklysuccessfull}Let $\delta\in S_{\ast}$; we call $\delta$
\uline{weakly successful} when there exists $\bar{q}\in\Xi_{\delta}$
with $X_{\bar{q}}=X_{\delta}$, recalling $X_{\delta}$ is from the
good structure $\mathfrak{r}$ defined in clause \ref{diamondsequence}
of \ref{finestructure}.
\end{defn}
\end{defn}
\end{defn}

\begin{claim}
For a weakly successful $\delta\in S_{\ast}$, the $\bar{q}$ of definition
\ref{weaklysuccessfull} is unique.

\begin{proof}
Observe that the coder $X_{\bar{q}}$ has all the information on $\bar{q}$,
therefore such a $\bar{q}$ must be unique.
\end{proof}
\end{claim}
\begin{defn}\label{whattodowithweaklysuccessful}
\begin{enumerate}
    \item For a weakly successful $\delta\in S_{\ast}$:

    \begin{itemize}
        \item look at the unique sequence $\bar{q} = \langle q_{\eta} : \eta \in \Lambda\rangle$ for which $X_{\bar{q}}=X_{\delta}$ and write $\Lambda_{\delta}^{\ast}=\Lambda$; for all $\eta\in\Lambda_{\delta}^{\ast}$ let $q_{\delta,\eta}^{\ast} = q_{\eta}$ and lastly $\bar{q}_{\delta}^{\ast} = \langle q_{\delta,\eta}^{\ast} : \eta\in\Lambda_{\delta}^{\ast}\rangle$,

        \item let $r_{\delta}^{\ast}=r_{\bar{q}_{\delta}^{\ast}}^{\ast}=\{\rho\in\boldsymbol{T}_{\!<\delta}:(\exists\nu\in\Lambda_{\delta}^{\ast})(\rho\in q_{\delta,\nu}^{\ast})\}$,

        \item finally, for each $\eta\in\Lambda_{\delta}^{\ast}$ and $\nu\in\boldsymbol{T}_{\!<\delta}$ with: $\eta\trianglelefteq\nu\in q_{\delta,\eta}^{\ast}$, let $q_{\delta,\nu}^{\ast}=(q_{\delta,\eta}^{\ast})^{[\nu]}$.
    \end{itemize}

    \item \label{23(2)}For $\delta\in S_{\ast}$ which is not weakly successful, let $r_{\delta}^{\ast}=\boldsymbol{T}_{\!<\delta}$ and for all $\eta\in r_{\delta}^{\ast}$: $q_{\delta,\eta}^{\ast}=(r_{\delta}^{\ast})^{[\eta]}$.
\end{enumerate}
\end{defn}

Now we can use the $\bar{X}$, being a diamond sequence:
\begin{claim}
\label{useanti}For every $\bar{q} = \langle q_{\eta} : \eta \in \Lambda\rangle \in \Xi_{\lambda}$
there is a stationary set of weakly successful $\delta\in S_{\ast}$
for which $\bar{q}_{\delta}^{\ast} = \langle q_{\eta} \cap \boldsymbol{T}_{\!<\delta} : \eta\in\Lambda\cap\boldsymbol{T}_{\!<\delta}\rangle$.
\begin{proof}
Recall that $\bar{X}$ is a diamond sequence, therefore for the set
$X_{\bar{q}}$, there is a stationary set of $\delta\in S_{\ast}$
for which $X_{\delta}=X_{\bar{q}}\cap\mathscr{H}(\delta)=X_{\bar{q}}\cap(\boldsymbol{T}_{\!<\delta}\times\boldsymbol{T}_{\!<\delta})$,
from the definition of the coder, and as $X_{\delta}$ is the coder
of $\bar{q}_{\delta}^{\ast}$ the conclusion follows: $\bar{q}_{\delta}^{\ast}=\langle q_{\eta}\cap\boldsymbol{T}_{\!<\delta}:\eta\in\Lambda\cap\boldsymbol{T}_{\!<\delta}\rangle$,
where $\bar{q}_{\delta}^{\ast}\in\Xi_{\delta}$ witness that $\delta$
is weakly successful.
\end{proof}
\end{claim}

\subsection{Defining the Main Forcing}
\begin{rem}
\label{beforeforcing}Below the main forcing will be defined, however
prior to the definition we would like to state the properties that
this forcing is expected to have; this remark is meant to describe
the general structure of the forcings $\mathbb{Q}_{\delta}'$ amd
$\mathbb{Q}_{\delta}$ for each $\delta\in S_{\ast}\cup\{\lambda\}$.
\begin{enumerate}
\item We would like those forcing to be subforcings of $\mathbb{Q}_{\delta}^{0}$
(but not nesseccarily complete subforcings), where $\mathbb{Q}_{\delta}\subseteq\mathbb{Q}_{\delta}'\subseteq\mathbb{Q}_{\delta}^{0}$.
\item For a condition $p\in\mathbb{Q}_{\delta}$ and a node $\eta\in p$,
we have $p^{[\eta]}\in\mathbb{Q}_{\delta}$; the same holds for $\mathbb{Q}_{\delta}'$.
\item The complete tree $\boldsymbol{T}_{\!<\delta}$ belongs to $\mathbb{Q}_{\delta}$;
so in particular it belongs to $\mathbb{Q}_{\delta}'$.
\end{enumerate}
\end{rem}

\begin{rem}
We are now ready to finally define the desired forcings. The following
is the main definition of the forcing. Pedantically, the induction
in definition \ref{trullymaindef} should be carried together with
the proof of Claim \ref{someproperties}.
\end{rem}

\begin{defn}
\label{trullymaindef}The definition is inductive on $\delta$;
we will define the subforcings of $\mathbb{Q}_{\delta}^{0}$: $\mathbb{Q}_{\delta}$
and $\mathbb{Q}_{\delta}'$ for all $\delta\in S_{\ast}\cup\{\lambda\}$,
in addition we will define the term successful for ordinals $\delta\in S_{\ast}$
and for each $\eta\in\boldsymbol{T}_{\!<\delta}$ and a tenuous $S\subseteq S_{\ast}\cap\delta$
we will define $p_{\eta,\delta,S}^{\ast}\in\mathbb{Q}_{\delta}^{0}$,
so we have to verify this, see claim \ref{someproperties} below.
\begin{enumerate}
    \item $\forall p\in\mathbb{Q}_{\delta}^{0},\quad p\in\mathbb{Q}_{\delta}'$ iff $\forall\delta_{1}\in\delta\cap S_{\ast}$, $\lg(\mathrm{tr}(p))<\delta_{1}\Rightarrow p\restriction\delta_{1}\in\mathbb{Q}_{\delta_{1}}$,
    
    So the forcing $\mathbb{Q}_{\delta}'$ is derived from the forcings $\mathbb{Q}_{\delta_{1}}$ for $\delta_{1}\in\delta\cap S_{\ast}$.

    \item $\forall p\in\mathbb{Q}_{\delta}^{0},\quad p\in\mathbb{Q}_{\delta}$ iff $p=p_{\eta,\delta,S}^{\ast}$ for some $\eta,S$ satisfying $\eta\in\boldsymbol{T}_{\!<\delta}$, $S\subseteq S_{\ast}\cap\delta$ is tenuous; such $p_{\eta,\delta,S}^{\ast}$ is defined below in clause \ref{maindef},
    
    \item We call $\delta<\lambda$ \uline{successful} when it is weakly successful and in addition: $r_{\delta}^{\ast},q_{\delta,\eta}^{\ast}\in\mathbb{Q}_{\delta}'$ for all $\eta\in\Lambda_{\delta}^{\ast}$.

    \uline{Explanation:} The successful ordinals represent the levels in which there will be a special pruning, determined by the diamond condition, so there is a ``control'' on the conditions defined uniquely, and in relation to $r_{\delta_{1}}^{\ast}$ of the corresponding levels $\delta_{1}$.

    \item \label{maindef}We assume that the forcings $\mathbb{Q}_{\delta'}',\mathbb{Q}_{\delta'}$ are defined for all $\delta'\in S_{\ast}\cap\delta$; the condition $p_{\eta',\delta',S'}^{\ast}$ is defined for all $\eta'\in\boldsymbol{T}_{<\delta'}$ and tenuous $S'\subseteq S_{\ast}\cap\delta'$; we shall define $p_{\eta,\delta,S}^{\ast}\in\mathbb{Q}_{\delta}^{0}$ for $\eta,S$ as above in the following way:
    \begin{enumerate}
        \item \label{maindefa}If $\sup(S)\leq\lg(\eta)$ then $p_{\eta,\delta,S}^{\ast}=\boldsymbol{T}_{\!<\delta}^{[\eta]}$.
        
        \item \label{maindefb}If $\sup(S)>\lg(\eta)$ and $S$ has no maximal element, then for each $\nu\in\boldsymbol{T}_{\!<\delta}$ it holds that $\nu\in p_{\eta,\delta,S}^{\ast}$ if and only if one of the following conditions holds:

        \begin{enumerate}
            \item \label{maindefb(i)}$\nu\trianglelefteq\eta$,
            
            \item \label{maindefb(ii)}$\eta\lhd\nu$ and there exists $\lg(\nu)<\delta_{1}\in S$ such that $\nu\in p_{\eta,\delta_{1},S\cap\delta_{1}}^{\ast}$,
            
            \item \label{maindefb(iii)}$\eta\lhd\nu$, $\lg(\nu)\geq\sup(S)$ and for all $\delta_{1}\in S\setminus(\lg(\eta)+1)$ and $\zeta<\delta_{1}$ it holds that $\nu\restriction\zeta\in p_{\eta,\delta_{1},S\cap\delta_{1}}^{\ast}$.
        \end{enumerate}
        
        \item \label{maindefc}If $\sup(S)>\lg(\eta)$ and $S$ has a last element $\delta_{1}<\delta$, such that $\delta_{1}$ is \uline{not successful}, then for each $\nu\in\boldsymbol{T}_{\!<\delta}$ it holds that $\nu\in p_{\eta,\delta,S}^{\ast}$ if and only if one of the following holds:
        \begin{enumerate}
            \item \label{maindefc(i)}$\lg(\nu)<\delta_{1}\wedge\nu\in p_{\eta,\delta_{1},S\cap\delta_{1}}^{\ast}$,
            
            \item\label{maindefc(ii)} $\lg(\nu)\geq\delta_{1}\wedge\nu\restriction\delta_{1}\in\lim_{\delta_{1}}(p_{\eta,\delta_{1},S\cap\delta_{1}}^{\ast})$ recalling \ref{whattodowithweaklysuccessful}(\ref{23(2)}).
        \end{enumerate}
        
        \item \label{maindefd}If $\sup(S)>\lg(\eta)$ and $S$ has a last element $\delta_{1}<\delta$ such that $\delta_{1}$ is \uline{successful}, then for each $\nu\in\boldsymbol{T}_{\!<\delta}$ it holds that $\nu\in p_{\eta,\delta,S}^{\ast}$ if and only if one of the following hold:

        \begin{enumerate}
            \item \label{maindefd(i)}$\lg(\nu)<\delta_{1}\wedge\nu\in p_{\eta,\delta_{1},S\cap\delta_{1}}^{\ast}$,
            
            \item \label{maindefd(ii)} $\lg(\nu)>\delta_{1}\wedge\nu\restriction\delta_{1}\in p_{\eta,\delta,S}^{\ast}\cap\boldsymbol{T}_{\delta_{1}}$ according to the definition of $p_{\eta,\delta,S}^{\ast}\cap\boldsymbol{T}_{\delta_{1}}$ in the following clause,
            
            \item \label{maindefd(iii)}On the level of the last element of $S$, the process is more interesting; for $\lg(\nu)=\delta_{1}$:

            \begin{enumerate}
                \item \label{maindefd(iii)(A)}If $\nu\notin\lim_{\delta_{1}}(r_{\delta_{1}}^{\ast})$ then $\nu\in\lim_{\delta_{1}}(p_{\eta,\delta_{1},S\cap\delta_{1}}^{\ast})$,
                
                \item \label{maindefd(iii)(B)}If $\nu\in\lim_{\delta_{1}}(r_{\delta_{1}}^{\ast})$ then 
                $$\nu\in \Big(\bigcup\{\lim_{\delta_{1}}(q_{\delta_{1},\eta'}^{\ast}):\eta'\in\Lambda_{\delta_{1}}^{\ast}\} \Big) \cap \lim_{\delta_{1}}(p_{\eta,\delta_{1},S\cap\delta_{1}}^{\ast}).$$
            \end{enumerate}
        \end{enumerate}
    \end{enumerate}
\end{enumerate}
\end{defn}

\uline{Explanation:} 
\begin{enumerate}
    \item [(0)]\setcounter{enumi}{0}Why do we arrive to this definition? In \cite{Sh:1004}, we start with $p=(\boldsymbol{T}_{\lambda})^{[\eta]}$, as in the $\lambda$-Cohen forcing, but add the following pruning: for each condition for some tenuous subset $S\subseteq S_{\ast}$ consisting of inaccessible cardinals, for each $\delta \in S$ we have a set $\Lambda_{\delta}$ of $\leq\delta$ maximal anti-chains of $\mathbb{Q}_{\delta}$.
    The pruning is: for each $\delta\in S$ we omit the $\eta$ of level $\delta$ avoiding at least one of those maximal antichains (and all proper initial segments of which are in the condition). How is this useful in proving the $^{\lambda}\lambda$ case? In \cite{Sh:1004}
it is done by having reflection of the property of being a maximal
antichain; this naturally require reflecting ``a
set of conditions is a maximal antichain in the forcing''. Naturally
each maximal antichain in $\Lambda_{\delta}$ comes from starting
with a maximal antichain in the whole forcing representing a name
of an ordinal $<\lambda$, and demanding that a condition from the
maximal antichain with trunk of length $<\delta$ will be in the generic
sets; i.e. our condition forces this. How can we do this without the
weak compactness assumption? By the diamond on $S_{\ast}$, we guess
a ``poor man maximal antichain,''
those are the $\bar{q}_{\delta}^{\ast}$-s. They look like an approximation
to a maximal antichain inside $r_{\delta}^{\ast}$, but usually are
far from being a maximal antichain. So we intend to make them maximal
antichain above $r_{\delta}^{\ast}$ by ``decree;''
or you may say by definition. This of course change the proof in several
ways.

    \item For the level $\delta\in S_{\ast}\cup\{\lambda\}$, the idea is that
the main forcing $\mathbb{Q}_{\delta}$, is a full tree that is only
being pruned in levels that are in the matching tenuous set, the idea
is to have enough ``thickness'' to achieve the required completeness
and more. In addition, the conditions are unique relative to the tenuous
set and the trunk and by their definition closely related to the diamond
sequence, intuitively, that is needed for the forcing to be bounding.

    \item Note that we gave a definition of $p_{\eta,\delta,S}^{\ast}$ in \ref{trullymaindef}(\ref{maindef}),
it will be defined as a subset of $\boldsymbol{T}_{\!<\delta}$, but
it is really a member of $\mathbb{Q}_{\delta}^{0}$ as is proved in
Claim \ref{someproperties}.
\end{enumerate}

\begin{defn}
For $\delta\in S_{\ast}\cup\{\lambda\}$, define $\name{\eta}_{\delta}$
to be a $\mathbb{Q}_{\delta}$-name: 
$$\name{\eta}=\bigcup\{\mathrm{tr}(p):p\in\Name{G}_{\mathbb{Q}_{\delta}}\}$$
where $\Name{G}_{\mathbb{Q}_{\delta}}$ is a canonical $\mathbb{Q}_{\delta}$-name for the generic filter. If $\delta$ is clear from the context,
we may write $\name{\eta}$ instead $\name{\eta}_{\delta}$.
\end{defn}

\begin{claim}
\label{someproperties}For all $\delta\in S_{\ast}\cup\{\lambda\}$,
$\eta\in\boldsymbol{T}_{\!<\delta}$ and tenuous $S\subseteq S_{\ast}\cap\delta$;
if $p=p_{\eta,\delta,S}^{\ast}$ \uline{then}:
\begin{enumerate}
    \item \label{properties1}If $\delta_{0}\in S_{\ast}\cap\delta$ is such
that $\eta\in\boldsymbol{T}_{<\delta_{0}}$ then $p_{\eta,\delta,S}^{\ast}\restriction\delta_{0}=p_{\eta,\delta_{0},S\cap\delta_{0}}^{\ast}$.

    \item \label{properties2}$\eta$ is the trunk of $p_{\eta,\delta,S}^{\ast}$.
    
    \item \label{properties3} $p_{\eta,\delta,S}^{\ast}\in\mathbb{Q}_{\delta}^{0}$; Moreover, $p_{\eta,\delta,S}^{\ast}\in\mathbb{Q}_{\delta}'$.
    
    \item \label{properties4}The tenuous set $S$ contains the set of pruning levels corresponding to the condition $p=p_{\eta,\delta,S}^{\ast}$; that is, $S_{p}\subseteq S$.

    \item \label{properties5}In addition, $\mathbb{Q}_{\delta}\subseteq\mathbb{Q}_{\delta}' \subseteq \mathbb{Q}_{\delta}^{0}$ and $\mathbb{Q}_{\lambda}=\mathbb{Q}_{\lambda}'$.
\end{enumerate}

\begin{proof}
We prove all statements of the claim by simultaneous induction on
the ordinals $\delta\in S_{\ast}\cup\{\lambda\}$. Assume that the
claim is true for $\mathbb{Q}_{\delta'}$ (so for all conditions $p_{\eta',\delta',S'}^{\ast}$
where $\delta'\in\delta\cap S_{\ast}$, $\eta'\in\boldsymbol{T}_{<\delta'}$
and $S'\subseteq S_{\ast}\cap\delta'$). We will now prove it for $p=p_{\eta,\delta,S}^{\ast}$
where $\eta\in\boldsymbol{T}_{\!<\delta}$ and $S\subseteq S_{\ast}\cap\delta$:
\begin{enumerate}
    \item Assume that $\delta_{0}\in S_{\ast}\cap\delta$ is such that $\eta\in\boldsymbol{T}_{<\delta_{0}}$;

    \begin{enumerate}
        \item \label{justsome(a)}First, for $\delta_{0}\in S$, look at the different cases in the definition of $p_{\eta,\delta,S}^{\ast}$:

        \begin{enumerate}
            \item For case \ref{maindefa}, $p_{\eta,\delta,S}^{\ast}\restriction\delta_{0}=((\boldsymbol{T}_{\!<\delta})^{[\eta]})\restriction\delta_{0}=(\boldsymbol{T}_{<\delta_{0}})^{[\eta]}=p_{\eta,\delta_{0},S\cap\delta_{0}}^{\ast}$.
            
            \item For case \ref{maindefb}, recalling $\lg(\eta)<\delta_{0}$, the initial segments of $\eta$ are clearly both in $p_{\eta,\delta,S}^{\ast}\restriction\delta_{0}$ and in $p_{\eta,\delta_{0},S\cap\delta_{0}}^{\ast}$; for all $\eta\trianglelefteq\nu\in\boldsymbol{T}_{<\delta_{0}}$ it holds by clause \ref{maindefb(ii)} of the definition that $\nu\in p_{\eta,\delta,S}^{\ast}\Longleftrightarrow\nu\in p_{\eta,\delta_{0},S\cap\delta_{0}}^{\ast}$.
            
            \item For cases \ref{maindefc} and \ref{maindefd}, for each $\nu\in\boldsymbol{T}_{<\delta_{0}}$ the relevant clauses are \ref{maindefc(i)} of \ref{maindefc} and \ref{maindefd(i)} of \ref{maindefd}. Those clauses trivially imply $\nu\in p_{\eta,\delta,S}^{\ast}\Longleftrightarrow\nu\in p_{\eta,\delta_{0},S\cap,\delta_{0}}^{\ast}$.
        \end{enumerate}
        
        \item Next, take any $\delta_{0}\in S_{\ast}\cap\delta\setminus S$:

        \begin{enumerate}
            \item For $\delta_{0}<\sup(S)$, there is $\delta_{0}<\delta'\in S$, by the induction hypothesis $p_{\eta,\delta',S\cap\delta'}^{\ast}\restriction\delta_{0} = p_{\eta,\delta_{0},S\cap\delta_{0}}^{\ast}$. By the clause \ref{justsome(a)} $p_{\eta,\delta,S}^{\ast}\restriction\delta' = p_{\eta,\delta',S\cap\delta'}^{\ast}$ thus $p_{\eta,\delta,S}^{\ast}\restriction\delta_{0} = p_{\eta,\delta_{0},S\cap\delta_{0}}^{\ast}$ follows.
            
            \item For $\delta_{0}\ge\sup(S)$, hence $\forall\delta'\in S$, $\delta_{0}>\delta'$ (else it is in the case of the previous clause) Clearly $p_{\eta,\delta,S}^{\ast}\restriction\delta_{0} \subseteq p_{\eta,\delta_{0},S\cap\delta_{0}}^{\ast}$: to prove the other inclusion let $\nu \in p_{\eta,\delta_{0},S\cap\delta_{0}}^{\ast}$.

            \begin{enumerate}
                \item If for some $\delta'\in S\cap\delta_{0}$, $\lg(\nu)<\delta'$ then by the induction assumption $\nu\in p_{\eta,\delta',S\cap\delta'}^{\ast}$, which by the previous clause implies $\nu\in p_{\eta,\delta,S}^{\ast}$.

                \item If for all $\delta'\in S\cap\delta_{0}$, $\lg(\nu)\geq\delta'$ and $\delta_{1}=\lg(\nu)$ is the last element of $S$: in both relevant cases of the definition (\ref{maindefc}, \ref{maindefd}) the level $\delta_{1}$ of the condition is determined by the previous levels of $S$ and by whether or not it is successful, so $p_{\eta,\delta,S}^{\ast}\cap\boldsymbol{T}_{\delta_{1}} = p_{\eta,\delta_{0},S}^{\ast}\cap\boldsymbol{T}_{\delta_{1}}$;in particular, $\nu\in p_{\eta,\delta,S}^{\ast}$.
                
                \item If for all $\delta'\in S\cap\delta_{0}$ we have $\lg(\nu)\geq\delta'$ and $\lg(\nu)\notin S$: then the level $\lg(\nu)$ is determined entirely by the restrictions to previous levels, so we are done.
            \end{enumerate}
        \end{enumerate}
    \end{enumerate}
    
    \item For all $\nu\in p_{\eta,\delta.S}^{\ast}$, first we will show that $\nu\trianglelefteq\eta$ or $\eta\triangleleft\nu$, the proof splits to cases according to the cases in the definition \ref{trullymaindef}(\ref{maindef}):

    \begin{enumerate}
        \item For case \ref{maindefa} it is clear.
        
        \item For case \ref{maindefb} it is also clear from the definition and the induction hypothesis.
        
        \item For case \ref{maindefc}:

        \begin{enumerate}
            \item For \ref{maindefc(i)} by the induction hypothesis.
            
            \item For \ref{maindefc(ii)} also, by the induction hypothesis, each such $\nu$ has $\eta\triangleleft\nu$.
        \end{enumerate}
        
        \item For case \ref{maindefd}:

        \begin{enumerate}
            \item For $\nu$ chosen in clause \ref{maindefd(i)} we have $\eta\triangleleft\nu$ or $\nu\trianglelefteq\eta$ by the induction hypothesis.
            
            \item For $\nu$ chosen in clause \ref{maindefd(ii)} it holds that $\eta\triangleleft\nu$, again using the induction hypothesis.

            \item For a node $\nu$ chosen in clause \ref{maindefd(iii)}, since $\nu\in\lim_{\delta_{1}}(p_{\eta,\delta_{1},S\cap\delta_{1}}^{\ast})$ and by the induction hypothesis, $\eta\trianglelefteq\nu$.
        \end{enumerate}
    \end{enumerate}

    Second, it remains to prove that $\eta$ is the maximal node for which each other branch is an extension or an initial segment of it.

    In case \ref{maindefa} it is clear; in cases \ref{maindefb} and \ref{maindefc} it follows from the induction hypothesis, the node $\eta$ is the trunk of the condition $p_{\eta,\delta_{1},S\cap\delta_{1}}^{\ast}$ for each $\delta_{1}\in\delta\cap S_{\ast}$ and so it has $\theta_{\epsilon}$ extensions to the level of the trunk +1; those extensions will be in the new condition $p_{\eta,\delta,S}^{\ast}$ thus $\eta$ will be a trunk there as well. 
    For case \ref{maindefd} recall that $\delta_{1}$ is a limit cardinal $>\lg(\eta)$, we can use the induction hypothesis again observing that before the $\delta_{1}$-th level there are no new prunings that didn't exist in $p_{\eta,\delta_{1},S\cap\delta_{1}}^{\ast}$, i.e. $p_{\eta,\delta,S}^{\ast}\restriction\delta_{1}=p_{\eta,\delta_{1},S\cap\delta_{1}}^{\ast}$ therefore if there were a different trunk containing $\eta$, it would have been a trunk of $p_{\eta,\delta_{1},S\cap\delta_{1}}^{\ast}$ as well --- a contradiction.

    \item Using induction, first we will show that $p_{\eta,\delta,S}^{\ast}\in\mathbb{Q}_{\delta}^{0}$, checking the clauses in definition \ref{veryfirstforcing}:

    \begin{enumerate}
        \item For clause \ref{veryfirstforcinga}; $p_{\eta,\delta,S}^{\ast}$ is a tree (follows directly from the induction hypothesis) and it has a trunk $\eta$ by part \ref{properties2} of this claim.

        \item To show clause \ref{veryfirstforcingb}, let $\nu\in p_{\eta,\delta,S}^{\ast}$. Assume $\eta\trianglelefteq\nu$ (the case of $\nu\triangleleft\eta$ follows from the case $\nu=\eta$) to show that there is an extension of $\nu$ to the level $\delta$:

        \begin{enumerate}
            \item In case \ref{maindefa} of definition \ref{trullymaindef}, let $\nu\trianglelefteq\nu'\in\boldsymbol{T}_{\delta}$, then it holds that also $\eta\trianglelefteq\nu'$ thus $\nu'\in\lim_{\delta}(p_{\eta,\delta,S}^{\ast})$.

            \item In case \ref{maindefb} of definition \ref{trullymaindef}, as $S$ is tenuous with no last element, if $\lg(\nu)<\sup(S)$ there is a closed unbounded subset $C$ of $\sup(S)$ with $\min(C)>\lg(\nu)$, such that 
            $$\alpha\in C \Rightarrow [\alpha = \sup(C\cap\alpha) \Leftrightarrow \alpha\notin S].$$

            Let $\langle\alpha_{i} : i < \zeta\rangle$ list $C$ in increasing order.

            We choose $\nu_{i}\in p_{\eta,\alpha_{i+1},S\cap\alpha_{i+1}}^{\ast}$, $\triangleleft$-increasing, $\nu\triangleleft\nu_{i}$ and $\lg(\nu_{i})=\alpha_{i}$ --- this is easy --- and let $\varrho=\underset{i<\zeta}{\bigcup}\nu_{i}$.
            Now if $\lg(\varrho)=\sup(S)=\delta$ then $\varrho\in\lim_{\delta}(p_{\eta,\delta,S}^{\ast})$ and we are done. So assume $\lg(\varrho) = \sup(S) < \delta$; clearly $\varrho\in p_{\eta,\delta,S}^{\ast} \cap \boldsymbol{T}_{\lg(\varrho)}$ hence $(p_{\eta,\delta,S}^{\ast})^{[\varrho]} =(\boldsymbol{T}_{\!<\delta})^{[\varrho]}$ so the derived conclusion is clear. 
            Finally, if $\lg(\nu)\geq\sup(S)$ then every $\nu\trianglelefteq\nu'$ with $\lg(\nu')=\delta$ has the property that for all $\delta_{1}\in S\setminus(\lg(\eta)+1)$, $\zeta<\delta_{1}$ it holds that $\nu'\restriction\zeta = \nu\restriction\zeta\in p_{\eta,\delta_{1},S\cap\delta_{1}}^{\ast}$ and so $\nu'\in\lim_{\delta}(p_{\eta,\delta,S}^{\ast})$.

            \item In case \ref{maindefc} of definition \ref{trullymaindef}, so $\delta_{1}=\max(S)$. First assume that $\nu\in p_{\eta,\delta,S}^{\ast}$ satisfies $\lg(\nu)<\max(S)=\delta_{1}$, then $\nu\in p_{\eta,\delta_{1},S\cap\delta_{1}}^{\ast}$ and by the induction hypothesis, there is some $\nu'\in\lim_{\delta_{1}}(p_{\eta,\delta_{1},S\cap\delta_{1}}^{\ast})$ with $\nu\trianglelefteq\nu'$, by the definition it also holds that $\nu'\in p_{\eta,\delta,S}^{\ast}$. Thus, it is left to prove the claim for any $\nu\in p_{\eta,\delta,S}^{\ast}$ such that $\lg(\nu)\geq\delta_{1}$; clearly for any extension $\nu\trianglelefteq\nu'\in\lim_{\delta}(p_{\eta,\delta,S}^{\ast})$, $\nu'\restriction\delta_{1} \in p_{\eta,\delta,S}^{\ast}$ and so $\nu'\restriction\xi \in p_{\eta,\delta,S}^{\ast}$ for all $\delta_{1}\leq\xi<\delta$.
            
            \item In case \ref{maindefd} of definition \ref{trullymaindef}, so $\nu\in p_{\eta,\delta,S}^{\ast}$, $\delta_{1}=\max(S)$ and $\delta_{1}$ is successful let $\beta=\lg(\nu)$:

            \begin{enumerate}
                \item First assume $\beta<\delta_{1}$, by induction hypothesis there is a node $\nu\trianglelefteq\nu'$, $\nu' \in  \lim_{\delta_{1}}(p_{\eta,\delta_{1},S\cap\delta_{1}}^{\ast})$.
                Now the proof splits to cases:

                \uline{Case 1:} if $\nu'\notin\lim_{\delta_{1}}(r_{\delta_{1}}^{\ast})$, then $\nu'\in p_{\eta,\delta,S}^{\ast}$, hence we have reduced the problem to the case $\beta=\delta_{1}$ dealt with below,

                \uline{Case 2:} if $\nu'\in\lim_{\delta_{1}}(r_{\delta_{1}}^{\ast})$ we still know that $\eta\in\nu'$ hence $\eta\in r_{\delta_{1}}^{\ast}$ hence for some $\varrho\in\Lambda_{\delta_{1}}^{\ast}$ we have $\eta \in q_{\delta_{1},\varrho}^{\ast}$ hence there is $\nu''\in\lim_{\delta_{1}}(q_{\delta_{1},\eta}^{\ast})$ and we continue with $\beta=\delta_{1}$ below.

                \item Second assume $\beta\geq\delta_{1}$, now every possible extension is being chosen after the level of height $\delta_{1}$, so by the previous clause certainly there is an element in the $\beta$ level by clause \ref{maindefd(ii)}.
            \end{enumerate}
        \end{enumerate}
        
        \item In successor levels all the extensions are taken, as defined in $\mathbb{Q}_{\delta}^{0}$.
        
        \item The set $S$ is tenuous and it holds that $S_{p}\subseteq S$ by the next clause, so $S_{p}$ (the set of the levels with the prunes) is also tenuous.
    \end{enumerate}

    Now we can see that $p_{\eta,\delta,S}^{\ast}\in\mathbb{Q}_{\delta}'$:
    \begin{itemize}
        \item Let $\delta'$ be $\lg(\mathrm{tr}(p))<\delta'\in S_{\ast}$; observe that in all the cases of the definition it holds that $p_{\eta,\delta,S}^{\ast}\restriction\delta'=p_{\eta,\delta',S\cap\delta'}^{\ast}\in\mathbb{Q}_{\delta'}$ and so we are done.
    \end{itemize}
    
    \item Looking at the definition, in case \ref{maindefa} trivial; for case \ref{maindefb} we will have that $S_{p_{\eta,\delta,S}^{\ast}}=\underset{\delta'\in S}{\bigcap}S_{p_{\eta,\delta',S\cap\delta'}^{\ast}}$ so by induction $S_{p_{\eta,\delta,S}^{\ast}}\subseteq S$; In case \ref{maindefc}, $S_{p_{\eta,\delta.S}^{\ast}}=S_{p_{\eta,\delta_{1},S\cap\delta_{1}}^{\ast}}$ and in case \ref{maindefd}, $S_{p_{\eta,\delta.S}^{\ast}} = S_{p_{\eta,\delta_{1},S\cap\delta_{1}}^{\ast}}$ or $S_{p_{\eta,\delta.S}^{\ast}} = S_{p_{\eta,\delta_{1},S\cap\delta_{1}}^{\ast}}\cup\{\delta_{1}\}$.
    Using the induction hypothesis and as $\delta_{1}\in S$, we are done.

    \item Reading the definition \ref{trullymaindef}, clearly $\mathbb{Q}_{\delta}'\subseteq\mathbb{Q}_{\delta}^{0}$ and $\mathbb{Q}_{\delta}\subseteq\mathbb{Q}_{\delta}'$ follows by clause \ref{properties3} because $\mathbb{Q}_{\delta} = 
    \{p_{\eta,\delta,S}^{\ast} : \eta\in\boldsymbol{T}_{\!<\delta} \mbox{ and } S = S_{\ast}\cap\delta\mbox{ is tenuous}\}$, so clause \ref{properties5} follows by clause \ref{properties3}.
    So clause \ref{properties5} holds indeed.

    To show $\mathbb{Q}_{\lambda}=\mathbb{Q}_{\lambda}'$, assume by contradiction that there is $p\in\mathbb{Q}_{\lambda}' \setminus \mathbb{Q}_{\lambda}$, so for all $\delta\in S_{\ast}$ we get $p\restriction\delta\in\mathbb{Q}_{\delta}$.
    Let $S = \bigcup\limits_{\delta\in S_{\ast}}S_{p\restriction\delta}$; if $S$ has a last element, then for some $\delta_{\ast}\in S_{\ast}$, $S = S_{p\restriction\delta_{\ast}}$ and so $p = 
    \{\nu \in \boldsymbol{T}_{<\lambda} : \nu\in p\restriction\delta_{\ast} \vee \nu\restriction\delta_{\ast} \in \lim_{\delta_{\ast}}(p)\}$, as $\max(S)<\delta$ and by clauses \ref{maindefc} and \ref{maindefd} of Definition \ref{trullymaindef}(\ref{maindef}), $p\in\mathbb{Q}_{\lambda}$ follows. Otherwise $S$ has no last element; $\nu\in p$ iff for each $\delta\in S$ either $\nu\in p\restriction\delta$ or $\lg(\nu)\geq\delta$ and $\forall\zeta<\delta$, $\nu\restriction\zeta\in p\restriction\delta$ and so by clause \ref{maindefb} of definition \ref{trullymaindef}(\ref{maindef}), $p\in\mathbb{Q}_{\lambda}$.
\end{enumerate}
\end{proof}
\end{claim}

\begin{claim}\label{afterforcingactually}
Let $\delta\in S_{\ast}\cup\{\lambda\}$;
\begin{enumerate}
    \item Let $p,q\in\mathbb{Q}_{\delta}^{0}$, if $p,q$ are compatible then $p\cap q\in\mathbb{Q}_{\delta}^{0}$.

    \item Let $p,q\in\mathbb{Q}_{\delta}'$, then $p,q$ are compatible if and only if $p\cap q\in\mathbb{Q}_{\delta}'$.

    \item Let $p,q\in\mathbb{Q}_{\delta}$, then $p,q$ are compatible if and only if $p\cap q\in\mathbb{Q}_{\delta}$.

    \item \label{whatantichainis1}Let $p,q\in\mathbb{Q}_{\delta}'$, then $p,q$ are compatible if and only if $\mathrm{tr}(p)\in q\wedge \mathrm{tr}(q)\in p$.

    \item \label{whatantichainis2}Let $p,q\in\mathbb{Q}_{\delta}$, then $p,q$ are compatible if and only if $\mathrm{tr}(p)\in q\wedge \mathrm{tr}(q)\in p$.
\end{enumerate}

\begin{proof}
In fact we saw the existence of most of the statements in this claim
already. Observe:
\begin{enumerate}
    \item If $p$ and $q$ are compatible, let $r\in\mathbb{Q}_{\delta}^{0}$
    be such that $r\subseteq p,q$, then $\mathrm{tr}(p),\mathrm{tr}(q)\trianglelefteq \mathrm{tr}(r)$.
    Now, $r\subseteq p\cap q$, assume without loss of generality $\mathrm{tr}(p)\triangleleft \mathrm{tr}(q)=\eta$, then $\eta$ will be the trunk of $p\cap q$. For each $\eta\in p\cap q$, the sets $\{\nu\in\lim_{\delta}(p) : \eta\triangleleft\nu\}$ and $\{\nu\in\lim_{\delta}(q) : \eta\triangleleft\nu\}$ must have a non-empty intersection as $S_{p},S_{q}$ are tenuous.
    For all $\eta\in p\cap q$, 
    $$\{j\in\theta_{\lg(\eta)} : \eta\overset{\frown}{}\langle j\rangle\in p\} = \{j\in\theta_{\lg(\eta)} : \eta\overset{\frown}{}\langle j\rangle\in q\} = \theta_{\lg(\eta)}$$ 
    and so $\{j\in\theta_{\lg(\eta)} : \eta\overset{\frown}{}\langle j\rangle\in p\cap q\}$.
    Finally, as $S_{p},S_{q}$ are tenuous, so is $S_{p\cap q}\subseteq S_{p}\cup S_{q}$ (by claim \ref{stationaryunions}), thus, $p\cap q \in \mathbb{Q}_{\delta}^{0}$.

    \item This clause and the following are shown by simultaneous induction: considering the forcing $\mathbb{Q}_{\delta}'$, if $p$ and $q$ are compatible there exists a condition $r\in\mathbb{Q}_{\delta}'$: $r\subseteq p,q$, thus $\mathrm{tr}(p),\mathrm{tr}(q)\trianglelefteq \mathrm{tr}(r)$. Let $\delta_{1}\in\delta\cap S_{\ast}$, $\lg(\mathrm{tr}(p))<\delta_{1}$; as $p\restriction\delta_{1}, q\restriction\delta_{1}, r\restriction\delta_{1} \in \mathbb{Q}_{\delta_{1}}$ and $r\restriction\delta_{1} \subseteq p\restriction\delta_{1}, q\restriction\delta_{1}$ and the following clause's induction assumption, we conclude that $p\cap q\restriction\delta_{1} \in \mathbb{Q}_{\delta_{1}}$ and $p\cap q\in\mathbb{Q}_{\delta}'$ follows. The other direction is trivial.

    \item We use induction; considering the forcing $\mathbb{Q}_{\delta}$, if $p$ and $q$ are compatible there exists a condition $r\in\mathbb{Q}_{\delta}$: $r\subseteq p,q$, thus $\mathrm{tr}(p),\mathrm{tr}(q)\trianglelefteq \mathrm{tr}(r)$. Assume without loss of generality $\mathrm{tr}(p)\triangleleft \mathrm{tr}(q)=\eta$ and let $S=S_{p}\cup S_{q}$.
    For $\nu\in\boldsymbol{T}_{<\delta'}$ with $\delta'\in S$, $\nu\in p\cap q \Longleftrightarrow \nu\in\lim_{\delta'}(p\restriction\delta')$ and $\nu\in\lim_{\delta'}(q\restriction\delta')$ which by the induction hypothesis implies $\nu\in\lim_{\delta'}(p\cap q\restriction\delta')$. In addition, one of the following holds:

    \begin{enumerate}
        \item $\delta'$ is not successful,

        \item $\delta'$ is successful and $\nu\notin\lim_{\delta'}(r_{\delta'}^{\ast})$,

        \item $\delta'$ is successful and $\nu\in\lim_{\delta'}(r_{\delta'}^{\ast})\cap(\bigcup\{\lim_{\delta'}(q_{\delta',\eta'}^{\ast}):\eta'\in\Lambda_{\delta'}^{\ast}\})$.
    \end{enumerate}

    There are no additional prunings, therefore $p\cap q = p_{\eta,\delta,S}^{\ast}$.

    \item This clause and the next one are shown by simultaneous induction on $\delta$. Considering the forcing $\mathbb{Q}_{\delta}'$:

    \begin{itemize}
        \item For the first direction, assume $p$ and $q$ are compatible; thus there exists a condition $r\in\mathbb{Q}_{\delta}'$: $r\subseteq p,q$. In particular, $\mathrm{tr}(p),\mathrm{tr}(q)\trianglelefteq \mathrm{tr}(r)$ thus $\mathrm{tr}(p),\mathrm{tr}(q)\in r\subseteq p\cap q$.

        \item For the other direction, assume $\mathrm{tr}(p)\in q\wedge \mathrm{tr}(q)\in p$, let $r=p\cap q$ and by previous clause $r\in\mathbb{Q}_{\delta}^{0}$. In particular, $\lg(\mathrm{tr}(p)),\lg(\mathrm{tr}(q))<\delta_{1}$ and since $p,q\in\mathbb{Q}_{\delta}'$ it implies that $p\restriction\delta_{1}, q\restriction\delta_{1} \in \mathbb{Q}_{\delta_{1}}$.
        We can use the induction hypothesis to conclude that $(p\restriction\delta_{1})\cap(q\restriction\delta_{1})=r\restriction\delta_{1}\in\mathbb{Q}_{\delta_{1}}$; therefore indeed $r\in\mathbb{Q}_{\delta}'$.
    \end{itemize}

    \item Considering the forcing $\mathbb{Q}_{\delta}$, assume it holds for $\mathbb{Q}_{\delta_{1}}$ with $\delta_{1}<\delta$:

    \begin{itemize}
        \item For the first direction, assume that $p$ and $q$ are compatible; thus there exists $r\in\mathbb{Q}_{\delta}$: $r\subseteq p,q$. In particular, $\mathrm{tr}(p),\mathrm{tr}(q)\trianglelefteq \mathrm{tr}(r)$ thus $\mathrm{tr}(p),\mathrm{tr}(q)\in r\subseteq p\cap q$.

        \item For the other direction, assume $\mathrm{tr}(p)\in q\wedge \mathrm{tr}(q)\in p$ and remember that for some nodes $\eta_{1},\eta_{2}\in\boldsymbol{T}_{\!<\delta}$ and tenuous sets $S_{1},S_{2}\subseteq S_{\ast}\cap\delta$, the conditions are in fact $p=p_{\eta_{1},\delta,S_{1}}^{\ast}$, $q=p_{\eta_{2},\delta,S_{2}}^{\ast}$. Recall the assumption and assume by symmetry that $\eta_{1}\trianglelefteq\eta_{2}$.
        Let $S=S_{1}\cup S_{2}$ and we will show that $p_{\eta_{2},\delta,S}^{\ast}\subseteq p\cap q$; this is indeed a condition in the forcing $\mathbb{Q}_{\delta}$ looking at definition \ref{trullymaindef}. Let $\nu\in p_{\eta_{2},\delta,S}^{\ast}$; the possibilities by clause \ref{maindef} are:

        \begin{itemize}
            \item If $S$ has no last element;

            \begin{itemize}
                \item If $\nu\trianglelefteq\eta_{2}$ then $\nu\in q$; as $\eta_{2}\in p$ it follows that $\nu\in p\cap q$.

                \item If for some $\delta_{1}\in S$, $\nu\in p_{\eta_{2},\delta_{1},S\cap\delta_{1}}^{\ast}$ then by the induction assumption, $p_{\eta_{2},\delta_{1}, S\cap\delta_{1}}^{\ast} \subseteq p\cap q \cap \boldsymbol{T}_{<\delta_{1}}$ so $\nu\in p\cap q$.

                \item If $\forall\delta_{1}\in S\forall\zeta<\delta_{1}:\nu\restriction\zeta\in p_{\eta_{2},\delta_{1},S\cap\delta_{1}}^{\ast}$, by the induction assumption $\forall\delta_{1}\in S\forall\zeta<\delta_{1}:\nu\restriction\zeta\in p\cap q$. If $S_{1}$ or $S_{2}$ had a last element, it was below $\sup(S)$ and in all the construction possibilities it can be seen that this implies $\nu\in p$ and $\nu\in q$.
            \end{itemize}

            \item If $S$ has a last element $\delta_{1}$, which is not successful;

            \begin{itemize}
                \item If $\nu\in p_{\eta_{2}, \delta_{1}, S\cap\delta_{1}}^{\ast}$, by the induction assumption $p_{\eta_{2},\delta_{1},S\cap\delta_{1}}^{\ast}\subseteq p\cap q\cap\boldsymbol{T}_{<\delta_{1}}$ so $\nu\in p\cap q$.

                \item If $\nu\restriction\delta_{1}\in\lim_{\delta_{1}}(p_{\eta_{2},\delta_{1},S\cap\delta_{1}}^{\ast})$, by the induction assumption $p_{\eta_{2},\delta_{1},S\cap\delta_{1}}^{\ast} \subseteq p\cap q\cap\boldsymbol{T}_{<\delta_{1}}$, so $\nu\restriction\delta_{1}\in p\cap q$. For each one of $S_{1},S_{2}$, if it doesn't contain $\delta_{1}$ then $\nu$ belongs to the matching condition ($p$ or $q$), while if it does contain $\delta_{1}$, as $\delta_{1}$ is not successful, the matching condition, say $p$, will have that $\nu\restriction\delta_{1}\in\lim_{\delta_{1}}(p)\Rightarrow\nu\in p$. 
            \end{itemize}
            
            \item If $S$ has a last element $\delta_{1}$, which is successful;

            \begin{itemize}
                \item If $\nu\in p_{\eta_{2},\delta_{1},S\cap\delta_{1}}^{\ast}$, by the
induction assumption $p_{\eta_{2},\delta_{1},S\cap\delta_{1}}^{\ast}\subseteq p\cap q\cap\boldsymbol{T}_{<\delta_{1}}$
so $\nu\in p\cap q$.

                \item If $\nu\in\lim_{\delta_{1}}(p_{\eta_{2},\delta_{1},S\cap\delta_{1}}^{\ast})$
then by the induction assumption $\nu\in\lim_{\delta_{1}}(p\cap q\cap\boldsymbol{T}_{<\delta_{1}})$:

                \begin{itemize}
                    \item In case $\nu\notin\lim_{\delta_{1}}(r_{\delta_{1}}^{\ast})$, for each one of $p,q$, if $S_{1}$ or $S_{2}$ have $\delta_{1}$ as their last element, $\nu\in p$ or $\nu\in q$ accordingly. Else the corresponding $S_{1}$ or $S_{2}$ has all its elements below $\delta_{1}$ and so by the possibilities in Definition \ref{trullymaindef}(\ref{maindef}), $\nu\in p\cap q$.

                    \item Else $\nu\in\lim_{\delta_{1}}(r_{\delta_{1}}^{\ast})$, then
                     $$(*) \ \nu\in\bigcup\{\lim_{\delta_{1}}(q_{\delta_{1},\eta'}^{\ast}) : \eta'\in\Lambda_{\delta_{1}}^{\ast}\}.$$
                    For each of $p,q$, if $S_{1}$ or $S_{2}$ have $\delta_{1}$ as their last element then since $\nu\in\lim_{\delta_{1}}(p\cap q)$ and by $({*})$ $\nu$ is contained in the corresponding condition. Else the corresponding $S_{1}$ or $S_{2}$ has all its elements below $\delta_{1}$ and so by all the possibilities in Definition \ref{trullymaindef}(\ref{maindef}), $\nu\in p\cap q$.
                \end{itemize}

                \item If $\nu\restriction\delta_{1}\in p_{\eta_{2},\delta,S}^{\ast}\cap\boldsymbol{T}_{\delta_{1}}$,since $p_{\eta_{2},\delta,S}^{\ast}\cap\boldsymbol{T}_{\delta_{1}}\subseteq p\cap q\cap\boldsymbol{T}_{\delta_{1}}$, $\nu\restriction\delta_{1}\in p\cap q$ and so $\nu\in p\cap q$, for any possibility for the construction of $p$ and $q$.
            \end{itemize}
        \end{itemize}
    \end{itemize}
\end{enumerate}
\end{proof}
\end{claim}

Recall the required properties of the forcings discussed in Remark
\ref{beforeforcing}, the first was shown in \ref{someproperties}(\ref{properties5})
and the rest are proven below:
\begin{claim}
Let $\delta\in S_{\ast}\cup\{\lambda\}$;
\begin{enumerate}
\item \label{enu:afterforcingactually2}For a condition $p=p_{\eta,\delta,S}^{\ast}\in\mathbb{Q}_{\delta}$
and a node $\nu\in p$ , we have $p\leq_{\mathbb{Q}_{\delta}}p^{[\nu]}\in\mathbb{Q}_{\delta}$
and $\mathrm{tr}(p^{[\nu]})=\max\{\mathrm{tr}(p),\nu\}$, if $\eta\triangleleft\nu$
then also $p^{[\nu]}=p_{\nu,\delta,S}^{\ast}$ holds, the same is
true for $\mathbb{Q}_{\delta}'\subseteq\mathbb{Q}_{\delta}^{0}$ .
\item It holds that $\boldsymbol{T}_{\!<\delta}\in\mathbb{Q}_{\delta}$ and
$\boldsymbol{T}_{\!<\delta}\in\mathbb{Q}_{\delta}'$, therefore $\boldsymbol{T}_{\!<\delta}$
is the minimal condition of $\mathbb{Q}_{\delta}$ and $\mathbb{Q}_{\delta}'$.
\end{enumerate}
\end{claim}

\begin{proof}
For $\delta\in S_{\ast}\cup\{\lambda\}$:
\begin{enumerate}
\item Assume $p=p_{\eta,\delta,S}^{\ast}\in\mathbb{Q}_{\delta}$ and let
$\nu\in p$.

\begin{itemize}
\item If $\nu\trianglelefteq\eta$ then $p^{[\nu]}=p\in\mathbb{Q}_{\delta}$;
in particular $\mathrm{tr}(p^{[\nu]})=\eta$.
\item Else, $\eta\triangleleft\nu$. In that case $p^{[\nu]}=p_{\nu,\delta,S}^{\ast}$,
we will show that using induction, looking at the clauses of definition
\ref{trullymaindef}(\ref{maindef}):

\begin{enumerate}
\item If, as in case \ref{maindefa}, $p=\boldsymbol{T}_{\!<\delta}^{[\eta]}$
then $p^{[\nu]}=\boldsymbol{T}_{\delta}^{[\nu]}$ which is in fact
$p_{\nu,\delta,\varnothing}^{\ast}$ and thus belongs to $\mathbb{Q}_{\delta}$
and $\mathrm{tr}(p^{[\nu]})=\nu$.
\item If, as in case \ref{maindefb}, there is a $\lg(\eta)<\delta'\in S$
with $\nu\in p_{\eta,\delta',S\cap\delta'}^{\ast}$, then by the induction
hypothesis $p^{[\nu]}\cap\boldsymbol{T}_{<\delta'}\in\mathbb{Q}_{\delta'}$.
In addition, $p^{[\nu]}=p_{\nu,\delta,S}^{\ast}$ and therefore belongs
to $\mathbb{Q}_{\delta}$. However $(\forall\eta_{1},\eta_{2}\in\boldsymbol{T}_{\!<\delta})(\eta_{1}\triangleleft\eta_{2}\in p_{\eta_{1},\delta,S}^{\ast}\Rightarrow p_{\eta_{1},\delta,S}^{\ast}\leq_{\mathbb{Q}_{\delta}}p_{\eta_{2},\delta,S}^{\ast})$
hence we are done. In the case $\lg(\nu)\geq\sup(S)$, $p^{[\nu]}=\boldsymbol{T}_{\!<\delta}^{[\nu]}=p_{\nu,\delta,S}^{\ast}$.
\item If, as in cases \ref{maindefc} and \ref{maindefd}, $S$ has
a last element $\delta_{1}<\delta$, such that $\lg(\eta)<\delta_{1}\in S$,
then if $\delta_{1}\leq\lg(\nu)$, $p^{[\nu]}=\boldsymbol{T}_{\!<\delta}^{[\nu]}=p_{\nu,\delta,S}^{\ast}$. 

\begin{enumerate}
\item In case \ref{maindefc} ($\delta_{1}$ is not successful):

\begin{enumerate}
\item If $\lg(\nu)<\delta_{1}$, then $p^{[\nu]}$ contains all the nodes
of the shape $\nu'\in p_{\eta,\delta,S}^{\ast}$ such that: (1) $\nu'\triangleleft\nu$,
(2) $\nu\trianglelefteq\nu'$ and $\lg(\nu')<\delta_{1}$ and $\nu'\in p_{\eta,\delta_{1},S\cap\delta_{1}}^{\ast}$
or (3) $\nu\trianglelefteq\nu'$ and $\lg(\nu')\geq\delta_{1}$ and
$\nu'\restriction\delta_{1}\in\lim_{\delta_{1}}(p_{\eta,\delta_{1},S\cap\delta_{1}}^{\ast})$.
By induction we have that $p_{\eta,\delta_{1},S\cap\delta_{1}}^{\ast[\nu]}=p_{\nu,\delta_{1},S\cap\delta_{!}}^{\ast}$
therefore $p^{[\nu]}=p_{\nu,\delta,S}^{\ast}\in\mathbb{Q}_{\delta}$
and $\mathrm{tr}(p^{[\nu]})=\nu$.
\item If $\lg(\nu)\geq\delta_{1}$, then $p^{[\nu]}$ contains all the nodes
of the shape $\nu'\in p_{\eta,\delta,S}^{\ast}$ such that: (1) $\nu'\triangleleft\nu$
or (2) $\nu\trianglelefteq\nu'$ (then $\lg(\nu')\geq\delta_{1}$)
and $\nu'\restriction\delta_{1}=\nu\restriction\delta_{1}\in\lim_{\delta_{1}}(p_{\eta,\delta_{1},S\cap\delta_{1}}^{\ast})$,
so in fact any $\nu'$ with $\nu\trianglelefteq\nu'$ is in that group.
Clearly $p^{[\nu]}=p_{\nu,\delta,S}^{\ast}\in\mathbb{Q}_{\delta}$
and $\mathrm{tr}(p^{[\nu]})=\nu$.
\end{enumerate}
\item In case \ref{maindefd} ($\delta_{1}$ is successful)

\begin{enumerate}
\item If $\lg(\nu)<\delta_{1}$, then $p^{[\nu]}$ contains all the nodes
of the shape $\nu'\in p_{\eta,\delta,S}^{\ast}$ such that (1) $\nu'\triangleleft\nu$,
(2) $\nu\trianglelefteq\nu'$ and $\lg(\nu')<\delta_{1}$ and $\nu'\in p_{\eta,\delta_{1},S\cap\delta_{1}}^{\ast}$,
(3) $\nu\triangleleft\nu'$ and $\lg(\nu')=\delta_{1}$ and ($\nu'\in\lim_{\delta_{1}}(p_{\nu,\delta_{1},S\cap\delta_{1}}^{\ast})\cap\lim_{\delta_{1}}(r_{\delta_{1}}^{\ast})\cap(\underset{\eta'\in\Lambda_{\delta_{1}}^{\ast}}{\bigcup}\lim_{\delta_{1}}(q_{\delta_{1},\eta'}^{\ast}))$
or $\nu'\in\lim_{\delta_{1}}(p_{\nu,\delta_{1},S\cap\delta_{1}}^{\ast})\setminus\lim_{\delta_{1}}(r_{\delta_{1}}^{\ast})$)
or (4) $\nu\triangleleft\nu'$, $\lg(\nu')>\delta_{1}$ and $\nu'\restriction\delta_{1}\in p_{\eta,\delta,S}^{\ast}\cap\boldsymbol{T}_{\delta_{1}}$.
By induction, $p_{\eta,\delta_{1},S\cap\delta_{1}}^{\ast[\nu]}=p_{\nu,\delta_{1},S\cap\delta_{1}}^{\ast}$
and since $r_{\delta_{1}}^{\ast}$ and $\forall\eta'\in\Lambda_{\delta_{1}}^{\ast}:q_{\delta_{1},\eta'}^{\ast}$
do not depend on the trunk, we see that $p_{\eta,\delta,S}^{\ast[\nu]}\restriction\delta_{1}=p_{\nu,\delta,S}^{\ast}\restriction\delta_{1}$
and $p_{\eta,\delta,S}^{\ast[\nu]}\cap\boldsymbol{T}_{\delta_{1}}=p_{\nu,\delta,S}^{\ast}\cap\boldsymbol{T}_{\delta_{1}}$,
thus the equality follows also for $\nu\triangleleft\nu'$ such that
$\lg(\nu')>\delta_{1}$ and $p_{\eta,\delta,S}^{\ast[\nu]}=p_{\nu,\delta,S}^{\ast}\in\mathbb{Q}_{\delta}$
and $\mathrm{tr}(p^{[\nu]})=\nu$.
\item If $\lg(\nu)\geq\delta_{1}$ then $p^{[\nu]}$ contains all the nodes
of the shape $\nu'\in p_{\eta,\delta,S}^{\ast}$ such that (1) $\nu'\trianglelefteq\nu$
or (2) $\nu\triangleleft\nu'$, $\lg(\nu')>\delta_{1}$ and $\nu'\restriction\delta_{1}\in p_{\eta,\delta,S}^{\ast}\cap\boldsymbol{T}_{\delta_{1}}$,
which since $\nu'\restriction\delta_{1}=\nu\restriction\delta_{1}$
implies $p_{\eta,\delta,S}^{\ast[\nu]}=p_{\nu,\delta,S}^{\ast}\in\mathbb{Q}_{\delta}$
and $\mathrm{tr}(p^{[\nu]})=\nu$.
\end{enumerate}
\end{enumerate}
\end{enumerate}
\end{itemize}

In particular it holds that $\mathrm{tr}(p^{[\nu]})=\max\{\eta,\nu\}$.

We have finished showing that $\nu\in p\in\mathbb{Q}_{\delta}\Rightarrow p^{[\nu]}\in\mathbb{Q}_{\delta}$.
What about $\mathbb{Q}_{\delta}'$?

Let $p\in\mathbb{Q}_{\delta}'$, then for each $\delta'\in\delta\cap S_{\ast}$
it holds that $p\restriction\delta'\in\mathbb{Q}_{\delta'}$. Next,
observe that $q=p^{[\nu]}$ for $\eta\trianglelefteq\nu\in p$; then
for all $\lg(\nu)\leq\delta'\in\delta\cap S_{\ast}$, $q\restriction\delta'=(p\restriction\delta')^{[\nu]}$.
Observe that $p\restriction\delta'\in\mathbb{Q}_{\delta'}$ and by
the first part of this clause also $(p\restriction\delta')^{[\nu]}\in\mathbb{Q}_{\delta'}$.
For $\nu\trianglelefteq\eta$, it holds that $p^{[\nu]}=p\in\mathbb{Q}_{\delta}'$
and in particular $\mathrm{tr}(p^{[\nu]})=\eta$; so indeed $\mathrm{tr}(p^{[\nu]})=\max\{\eta,\nu\}$.

By the definition of $p^{[\nu]}$, $p^{[\nu]}\subseteq p$ and since
the order of both forcing $\mathbb{Q}_{\delta}$ and $\mathbb{Q}_{\delta}'$
is inverse inclusion and by what we just showed if $p\in\mathbb{Q}_{\delta}$,
then $p\leq_{\mathbb{Q}_{\delta}}p^{[\nu]}$ and if $p\in\mathbb{Q}_{\delta}'$
then $p\leq_{\mathbb{Q}_{\delta}'}p^{[\nu]}$.

\item It holds that $\boldsymbol{T}_{\!<\delta} = p_{\langle\ \rangle,\delta,\varnothing}^{\ast}$
so trivially it belongs to $\mathbb{Q}_{\delta}$, it then by the
first clause of this claim it follows that $\boldsymbol{T}_{\!<\delta}\in\mathbb{Q}_{\delta}'$
and we are done.
\end{enumerate}
\end{proof}

\begin{lem}
If $p\in\mathbb{Q}_{\delta}$ and $\lg(\mathrm{tr}(p))<\alpha<\delta$ then
$\{p^{[\eta]}:\eta\in p\cap\boldsymbol{T}_{\alpha}\}$ is a maximal
antichain of $\mathbb{Q}_{\delta}$ above $p$, the same holds for
$\mathbb{Q}_{\delta}'$.\end{lem}
\begin{proof}
Let $\eta,\nu\in p\cap\boldsymbol{T}_{\alpha}$ be different, then
$\eta\notin p^{[\nu]}$ and $\nu\notin p^{[\eta]}$; recalling clause
\ref{whatantichainis2} of claim \ref{afterforcingactually} it
follows that $p^{[\eta]},p^{[\nu]}$ are incompatible and so the set
$\{p^{[\eta]}:\eta\in p\cap\boldsymbol{T}_{\alpha}\}$ is an antichain
in $\mathbb{Q}_{\delta}$ above $p$. In addition, let $p\leq_{\mathbb{Q}_{\delta}}q\in\mathbb{Q}_{\delta}$
and let $\eta_{0}\in q\cap\boldsymbol{T}_{\alpha}\subseteq p\cap\boldsymbol{T}_{\alpha}$;
then $p^{[\eta_{0}]}$ is compatible with $q$: their common upper
bound is $q^{[\eta_{0}]}$ and this is in $\mathbb{Q}_{\delta}$ by
what we just showed. Clearly $p^{[\eta_{0}]}\in\{p^{[\eta]}:\eta\in p\cap\boldsymbol{T}_{\alpha}\}$
so this set is indeed a maximal antichain. The proof for $\mathbb{Q}_{\delta}'$
is identical.\end{proof}
\begin{cor}
\label{antichain}Let $\delta\in S_{\ast}$, if $\delta$ is successful
\uline{then} the set $\bar{q}_{\delta}^{\ast}=\langle q_{\delta,\eta}^{\ast}:\eta\in\Lambda_{\delta}^{\ast}\rangle$
is an antichain of $\mathbb{Q}_{\delta}'$ above $r_{\delta}^{\ast}$.\end{cor}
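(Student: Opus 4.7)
The plan is to assemble this directly from the definition of ``successful'' together with Claim \ref{afterforcing actually}; the corollary has three assertions to check, and none of them should require a long argument.

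First I would verify that each $q_{\delta,\eta}^{\ast}$ belongs to $\mathbb{Q}_{\delta}'$. This is immediate: Definition \ref{trully main def} declares $\delta$ to be successful precisely when it is weakly successful and $r_{\delta}^{\ast}, q_{\delta,\eta}^{\ast} \in \mathbb{Q}_{\delta}'$ for every $\eta \in \Lambda_{\delta}^{\ast}$, so there is nothing to argue here.

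Next I would verify the ``above $r_{\delta}^{\ast}$'' assertion. By Definition \ref{what to do with weakly successful} we have $r_{\delta}^{\ast} = \{\rho \in \boldsymbol{T}_{<\delta} : (\exists \nu \in \Lambda_{\delta}^{\ast})(\rho \in q_{\delta,\nu}^{\ast})\}$, which is a union containing each $q_{\delta,\eta}^{\ast}$ as a subtree. Thus $q_{\delta,\eta}^{\ast} \subseteq r_{\delta}^{\ast}$ for every $\eta \in \Lambda_{\delta}^{\ast}$, and since the order of $\mathbb{Q}_{\delta}'$ is inverse inclusion this gives $r_{\delta}^{\ast} \leq_{\mathbb{Q}_{\delta}'} q_{\delta,\eta}^{\ast}$, as required.

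Finally I would verify that the sequence forms an antichain. Fix $\eta \neq \nu$ in $\Lambda_{\delta}^{\ast}$. Recall that $\bar{q}_{\delta}^{\ast}$ is the unique element of $\Xi_{\delta}$ with $X_{\bar{q}_{\delta}^{\ast}} = X_{\delta}$, and clause (4) of Definition \ref{Chi} for this $\bar{q}_{\delta}^{\ast}$ says that either $tr(q_{\delta,\eta}^{\ast}) = \eta \notin q_{\delta,\nu}^{\ast}$ or $tr(q_{\delta,\nu}^{\ast}) = \nu \notin q_{\delta,\eta}^{\ast}$. On the other hand, by Claim \ref{afterforcing actually}(\ref{what antichain is 1}) two elements of $\mathbb{Q}_{\delta}'$ are compatible if and only if each contains the trunk of the other. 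Combining these, $q_{\delta,\eta}^{\ast}$ and $q_{\delta,\nu}^{\ast}$ cannot be compatible. I do not expect any serious obstacle in this corollary; the only real content is matching the ``trunks in each other'' compatibility criterion from Claim \ref{afterforcing actually} with the defining non-absorption property of elements of $\Xi_{\delta}$, and both facts are already in hand.
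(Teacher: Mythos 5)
Your proposal is correct and is essentially the argument the paper intends: the corollary is stated without an explicit proof precisely because it follows by combining the definition of ``successful'' (membership in $\mathbb{Q}_{\delta}'$), the definition of $r_{\delta}^{\ast}$ as the union (giving ``above'' under inverse inclusion), and clause (4) of Definition \ref{Chi} together with the compatibility criterion of Claim \ref{afterforcing actually}(\ref{what antichain is 1}). All three steps in your write-up are accurate and complete.
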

\begin{proof}
Recall that if $\eta\neq\nu\in\Lambda_{\delta}^{\ast}$ \uline{then}
$\eta\notin q_{\delta,\nu}^{\ast}\vee\nu\notin q_{\delta,\eta}^{\ast}$
(see definition \ref{Chi}) and recall \ref{afterforcingactually}(\ref{whatantichainis1}).
\end{proof}

\subsection{Properties of the Forcing}
\begin{claim}\label{completenessfornoninaccessibles}
Let $\delta\in S_{\ast}$
be such that $S_{\ast}\cap\delta$ is non-stationary in $\delta$,
\uline{then} the forcing $\mathbb{Q}_{\delta}$ is strategically
complete in $\mathrm{cf}(\delta)$.

\begin{rem}
Remember that if $\delta\in S_{\ast}$ is not inaccessible then $S_{\ast}\cap\delta$ is not stationary in $\delta$. Also, if $\alpha < \lambda$ and $\delta=\min(S_{\ast}\setminus(\alpha+1))$
then $S_{\ast}\cap\delta$ is not stationary.
\end{rem}

\begin{proof}
First, assume that this holds for each $\delta_{0}<\delta$. Now,
there is a club $E$ of $\delta$ such that $E\cap S_{\ast}=\varnothing$.
Let $p\in\mathbb{Q}_{\delta}$ and $\alpha=\mathrm{cf}(\delta)$, we shall
play the game $\Game_{\alpha}(p,\mathbb{Q}_{\delta})$, determining
a strategy for COM;
\begin{enumerate}
    \item At the first step player COM will choose a condition $p_{0}\geq p$
and after INC chose $q_{0}$, COM chooses a club $E_{0}$ of $\delta$
disjoint to $S_{q_{0}}$.

    \item In successor step $i+1<\alpha$: look at the condition $q_{i}$ that
player INC chose in the $i$-th step; let $\beta_{i}=\lg(\mathrm{tr}(q_{i}))$.
In addition let $\gamma_{i}=\min(E\setminus(\beta_{i}+1))$. Now choose
some $\eta_{i+1}\in q_{i}\cap\boldsymbol{T}_{\gamma_{i}}$; \uline{player
COM will choose} $p_{i+1}=(q_{i})^{[\eta_{i+1}]}$, this is a condition
of the forcing $\mathbb{Q}_{\delta}$ by claim \ref{afterforcingactually}.
Observe that $\mathrm{tr}(q_{i})\trianglelefteq\eta_{i+1}$, $q_{i}\leq_{\mathbb{Q}_{\delta}}p_{i+1}$,
and by the choice made by COM, INC was forced to have $\eta_{i+1}\trianglelefteq \mathrm{tr}(q_{i+1})$. 

    Finally, after INC's $(i+1)^{\mathrm{th}}$ turn, COM will let $E_{i+1}$ be a club contained in $E_i \setminus S_{q_i+1}$: this is possible as $E_i$ is a club of delta and $S_{q_i+1}$ is tenuous.

    \item In limit step $i(\ast)<\delta$: \uline{player COM will choose} $p_{i(\ast)}=\bigcap\limits_{i<i(*)}q_{i}$. Let $S_{i(\ast)} = \bigcup\limits_{i<i(\ast)} S_{q_{i}} \setminus \lg(\nu_{i(\ast)})$ and $\nu_{i(\ast)} = \bigcup\limits_{i<i(\ast)}\mathrm{tr}(q_{i})$.

    \begin{enumerate}
        \item The node $\nu_{i(\ast)}$ belongs to all the conditions that player INC had chosen in the steps $i<i(\ast)$: observe that $$\delta' = \sup\{\beta_{i} : i < i(\ast)\} = \sup\{\gamma_{i} : i < i(\ast)\}$$
        but $\gamma_{i}\in E$ hence $\delta'\in E$. Since $E$ is a club disjoint to $S_{i}$ there is no pruning in the level $\delta'$; in particular, $\nu_{i(\ast)}$ is not being pruned. Thus $\nu_{i(\ast)}\in q_{i}$ for all $i<i(\ast)$.

        \item It remains to show that $p_{i(\ast)}$ is indeed a condition in the forcing and in fact $p_{i(\ast)} = p_{\nu_{i(\ast)},\delta,S_{i(\ast)}}^{\ast}$. First observe that $\mathrm{cf}(\delta')=\mathrm{cf}(i(\ast))$, next:

        \begin{enumerate}
            \item For each node $\nu'\in p_{i(\ast)}$ such that $\lg(\nu')<\lg(\nu_{i(\ast)})$ there is $i<i(\ast)$ such that $\lg(\nu')<\lg(\mathrm{tr}(q_{i}))$ and as
$p_{i(\ast)}$ is the intersection, we get $\nu'\triangleleft \mathrm{tr}(q_{i})$
and so $\nu' \trianglelefteq \bigcup\limits_{i<i(\ast)}\mathrm{tr}(q_{i})$.
Thus $\bigcup\limits_{i<i(\ast)}\mathrm{tr}(q_{i})$ is a node such that there
is no splitting before it in $p_{i(\ast)}$. However in each level
above this node there are splittings as those splittings exist for
each $q_{i}$, and they are ``full'', see definition \ref{veryfirstforcing}(\ref{veryfirstforcing1})(\ref{veryfirstforcingc})
and definition \ref{trullymaindef}. In addition, for each $i<j<i(\ast)$
any splitting in the tree $q_{j}$ exists in the tree $q_{i}$ as
well: this is an increasing sequence of conditions and $q_{j}\subseteq q_{i}$.
It follows that $p_{i(\ast)}$ is a tree with trunk $\nu_{i(\ast)}$
(if we use the filters $D_{\epsilon}$ for $\epsilon<\lambda$, this
is somewhat more delicate, still OK).
\item The set $S_{i(\ast)}$ is tenuous: the set $S_{\ast}\cap\delta$ is
non-stationary by the claim assumption, if $\epsilon\leq\lg(\nu_{i(\ast)})$
then $S_{i(\ast)}\cap\epsilon=\varnothing$ so non- stationary. For
all $\epsilon\in(\lg(\nu_{i(\ast)}),\delta)$, if $S_{\ast}$ doesn't
reflect in $\epsilon$ then $S_{i(\ast)}\restriction\epsilon\subseteq S_{\ast}$
is non- stationary in $\epsilon$ by claim \ref{stationaryunions}(\ref{unionnonstat});
if $S_{\ast}$ reflects to $\epsilon$ then $\epsilon$ is inaccessible
and thus $S_{i(\ast)}\restriction\epsilon$ is a union of $i(\ast)$
sets, non- stationary in $\epsilon$ so by claim \ref{stationaryunions}(\ref{unionnonstat})
$S_{i(\ast)}\restriction\epsilon$ is non- stationary. Putting together
$S_{i(\ast)}$ is tenuous. 
\end{enumerate}
\end{enumerate}

For all $i<i(\ast)$, we shall see that $p_{\nu_{i(\ast)},\delta,S_{i(\ast)}}^{\ast}\subseteq q_{i}$;
as $\mathrm{tr}(q_{i})\trianglelefteq\nu_{i(\ast)}$, observe that $q_{i}^{[\nu_{i(\ast)}]}\subseteq q_{i}$
by a previous lemma. Also, $q_{i}^{[\nu_{i(\ast)}]}$ and $p_{\nu_{i(\ast)},\delta,S_{i(\ast)}}^{\ast}$
have the same trunk, with the first having a smaller stationary set:
$S_{q_{i}^{[\nu_{i(\ast)}]}}\subseteq S_{i(\ast)}$, recalling \ref{afterforcingactually}(\ref{whatantichainis2}),
get $p_{\nu_{i(\ast)},\delta,S_{i(\ast)}}^{\ast}\subseteq q_{i}^{[\nu_{i(\ast)}]}$,
thus $p_{\nu_{i(\ast)},\delta,S_{i(\ast)}}^{\ast}\subseteq q_{i}$
and $p_{\nu_{i(\ast)},\delta,S_{i(\ast)}}^{\ast}\subseteq p_{i(\ast)}$.

We need to also see that $p_{i(\ast)}\subseteq p_{\nu_{i(\ast)},\delta,S_{i(\ast)}}^{\ast}$;
assume this doesn't hold. Then, for some $\nu'\in p_{i(\ast)}$, $\nu'\notin p_{\nu_{i(\ast)},\delta,S_{i(\ast)}}^{\ast}$;
let $\delta'$ be the minimal such that $\nu'\restriction\delta'\notin p_{\nu_{i(\ast)},\delta,S_{i(\ast)}}^{\ast}$;
remembering definition \ref{maindef}, necessarily $\nu'\restriction\delta'\in\lim_{\delta'}(p_{\nu_{i(\ast)},\delta',S_{i(\ast)}\cap\delta'}^{\ast})$
and $\nu'\restriction\delta'\in\lim_{\delta'}(r_{\delta'}^{\ast})\setminus(\bigcup\{\lim_{\delta'}(q_{\delta',\eta'}^{\ast}):\eta'\in\Lambda_{\delta'}^{\ast}\})$.
As $\delta'\in S_{i(\ast)}$, there exists $i<i(\ast)$ such that
$\delta'\in S_{q_{i}}$; since for all $\delta''<\delta'$, $\nu'\restriction\delta''\in p_{\nu_{i(\ast)},\delta,S_{i(\ast)}}^{\ast}\subseteq q_{i}$
so $\nu'\restriction\delta'\in\lim_{\delta'}(q_{i})$ and by the construction
of $q_{i}$ as $\nu'\restriction\delta'\in\lim_{\delta'}(r_{\delta'}^{\ast})\setminus(\bigcup\{\lim_{\delta'}(q_{\delta',\eta'}^{\ast}):\eta'\in\Lambda_{\delta'}^{\ast}\})$
it follows that $\nu'\restriction\delta'\notin q_{i}$, a contradiction
to the assumption $\nu'\in p_{i(\ast)}\subseteq q_{i}$.

We showed that $p_{\nu_{i(\ast)},\delta,S_{i(\ast)}}^{\ast}=p_{i(\ast)}$,
in addition for all $i<i(\ast)$, $q_{i}\leq_{\mathbb{Q}_{\lambda}}p_{i(\ast)}$
so easily $p_{i(\ast)}$ is the smallest supremum of those conditions.

\end{enumerate}
\end{proof}
\end{claim}

\begin{thm}
\label{completenessforinaccessibles}If $\delta\in S_{\ast}\cup\{\lambda\}$
is inaccessible, \uline{then} the forcing $\mathbb{Q}_{\delta}$
is strategically complete in $\delta$.

\begin{proof}
For $\delta\in S_{\ast}\cup\{\lambda\}$ and $p\in\mathbb{Q}_{\delta}$,
we shall play the game $\Game_{\delta}(p,\mathbb{Q}_{\delta})$. We
shall construct inductively the sequence $\langle p_{i},q_{i},E_{i}:i<\delta\rangle$
where $p_{i}$ is the $i$-th move of player COM, $q_{i}$ is the
$i$-th move of player INC, $E_{i}$ is a club in $\delta$ chosen
by COM after INC plays his $i$-th move; it shall be disjoint to $S_{q_{i}}$;
assume that for all $i'<j'<i$: $E_{j'}\subseteq E_{i'}$, this will
prove the desired condition.
\begin{enumerate}
    \item At the first step player COM will choose a condition $p_{0}\geq p$
    and after INC chose $q_{0}$, COM chooses a club $E_{0}$ of $\delta$ disjoint to $S_{q_{0}}$.

    \item In successor step $i + 1 < \delta$: look at the condition $q_{i}$ that player INC chose in the $i$-th step; $E_{i}$ is a club disjoint to $S_{q_{i}}$ s.t. $E_{i}\subseteq\underset{j<i}{\bigcap}E_{j}$ (this club was defined in step $i$); let $\beta_{i}=\lg(\mathrm{tr}(q_{i}))$ and let $\gamma_{i}=\min(E_{i}\setminus(\beta_{i}+1))$. Next, for some node $\eta_{i+1}\in q_{i}\cap\boldsymbol{T}_{\gamma_{i}}$; \uline{player COM will choose} $p_{i+1}=(q_{i})^{[\eta_{i+1}]}$, this is a condition of the forcing $\mathbb{Q}_{\delta}$ by Claim \ref{afterforcingactually}. Observe that $\mathrm{tr}(q_{i})\trianglelefteq\eta_{i+1}$, $q_{i}\leq_{\mathbb{Q}_{\delta}}p_{i+1}$ and by the choice player COM made, she forced player INC to have $\eta_{i+1}\trianglelefteq \mathrm{tr}(q_{i+1})$. Finally, after INC will play his $i+1$-th turn, COM will let $E_{i+1}$ be a club: $E_{i+1}\subseteq E_{i}\setminus S_{q_{i+1}}$; this is possible as $E_{i}$ is a club of $\delta$, $S_{q_{i+1}}$ is tenuous.

    \item In limit step $i(\ast)<\alpha$: \uline{player COM will choose} $p_{i(\ast)} = \bigcap\limits_{i<i(*)}q_{i}$, let $S_{i(\ast)} = \bigcup\limits_{i<i(\ast)} S_{q_{i}} \setminus \lg(\nu_{i(\ast)})$ where $\nu_{i(\ast)} = \bigcup\limits_{i<i(\ast)} \mathrm{tr}(q_{i})$ and $E_{i(\ast)} = \bigcap\limits_{i<i(*)}E_{i}$. Observe that $\lg(\nu_{i(\ast)})<\delta$ since $\delta$ is inaccessible, in addition $E_{i(\ast)}$ is a club in $\delta$ as an intersection of $i(\ast)<\delta$ clubs.

    \begin{enumerate}
        \item The node $\nu_{i(\ast)}$ belongs to all the conditions that player INC had chosen in the steps $i<i(\ast)$: observe that 
        $$\delta'=\sup\{\beta_{i}:i<i(\ast)\}=\sup\{\gamma_{i}:i<i(\ast)\}$$
        as $\mathrm{tr}(q_i) \in q_i$ is $\lhd$-increasing and $q_i$ is decreasing for $i < i(*)$. Clearly $$\{\nu_{i(*)} \restriction \beta : \beta < \delta'\} = \big\{\mathrm{tr}(q_i) \restriction \beta : i < i(*),\ \beta < \lg(\mathrm{tr}(q_i)) \big\} \subseteq \bigcap \{q_i : i < i(*)\}.$$
        
        Since $E_{i(\ast)}$ is a club that is a decreasing intersection of clubs, note $i<i(\ast) \Rightarrow E_{i}\cap S_{q_{i}}=\varnothing\Rightarrow E_{i(\ast)}\cap S_{q_{i}}=\varnothing$, there are no prunings in the level $\delta'$, in particular $\nu_{i(\ast)}$ is not being pruned. Thus $\nu_{i(\ast)}\in q_{i}$ for all $i<i(\ast)$.

        \item It remains to show that $p_{i(\ast)}$ is indeed a condition in the forcing, first observe that $\mathrm{cf}(\delta')=\mathrm{cf}(i(\ast))$ and $\delta'\geq i(\ast)$, next:

        \begin{enumerate}
            \item For each node $\nu'\in p_{i(\ast)}$ such that $\lg(\nu')<\delta'$ there is $i<i(\ast)$ such that $\lg(\nu')<\lg(\mathrm{tr}(q_{i}))$ and as $p_{i(\ast)}$ is the intersection, we get $\nu'\triangleleft \mathrm{tr}(q_{i})$ and so $\nu'\trianglelefteq\bigcup\limits_{i<i(\ast)}\mathrm{tr}(q_{i})$.
            We get that $\bigcup\limits_{i<i(\ast)}\mathrm{tr}(q_{i})$ is a node such that there is no splitting before it in $p_{i(\ast)}$. However, in each level above it (in the $\unlhd$ sense) there are splittings as there are such splittings for each $q_{i}$. In addition, for each $i<j<i(\ast)$ any splitting in the tree $q_{j}$ exists in the tree $q_{i}$ as well: this is an increasing sequence of conditions and $q_{j}\subseteq q_{i}$.
            It follows that $p_{i(\ast)}$ is a tree with trunk $\nu_{i(\ast)}$.
            
            \item The set $S_{i(\ast)}$ is tenuous: as a union of $i(\ast)<\delta=\mathrm{cf}(\delta)$ non-stationary sets, $S_{i(\ast)}$ is non-stationary in $\delta$ by \ref{stationaryunions}(\ref{unionnonstat}). For all $\epsilon<\delta$, if $\epsilon<\delta'$ then $S_{i(\ast)}\cap\epsilon=\varnothing$ so this is trivial hence assume $\epsilon>\delta'$; hence $\epsilon>i(\ast)$; if $S_{\ast}$ doesn't reflect to $\epsilon$ then $S_{i(\ast)}\restriction\epsilon \subseteq S_{\ast}$ is non-stationary in $\epsilon$ by \ref{stationaryunions}(\ref{unionnonstat}); if $S_{\ast}$ reflects to $\epsilon$ then $\epsilon$ is inaccessible and thus $S_{i(\ast)}\restriction\epsilon$ is a union of $i(\ast)$ sets, non-stationary in $\epsilon$ and recall $\epsilon>i(\ast)$ so by \ref{stationaryunions}(\ref{unionnonstat}) $S_{i(\ast)}\restriction\epsilon$ is non-stationary; together $S_{i(\ast)}$ is tenuous.
        \end{enumerate}
    \end{enumerate}

    For all $i<i(\ast)$, we shall see that $p_{\nu_{i(\ast)},\delta,S_{i(\ast)}}^{\ast}\subseteq q_{i}$;
as $\mathrm{tr}(q_{i})\trianglelefteq\nu_{i(\ast)}$, observe that $q_{i}^{[\nu_{i(\ast)}]}\subseteq q_{i}$
by a previous lemma. Also, $q_{i}^{[\nu_{i(\ast)}]}$ and $p_{\nu_{i(\ast)},\delta,S_{i(\ast)}}^{\ast}$
have the same trunk, with the first having a smaller stationary set:
$S_{q_{i}}\subseteq S_{i(\ast)}$, recalling \ref{afterforcingactually}(\ref{whatantichainis2})
we get $p_{\nu_{i(\ast)},\delta,S_{i(\ast)}}^{\ast}\subseteq q_{i}^{[\nu_{i(\ast)}]}$,
thus $p_{\nu_{i(\ast)},\delta,S_{i(\ast)}}^{\ast}\subseteq q_{i}$
and $p_{\nu_{i(\ast)},\delta,S_{i(\ast)}}^{\ast}\subseteq p_{i(\ast)}$.

We need to also see that $p_{i(\ast)}\subseteq p_{\nu_{i(\ast)},\delta,S_{i(\ast)}}^{\ast}$;
assume this doesn't hold. Then, for some $\nu'\in p_{i(\ast)}$, $\nu'\notin p_{\nu_{i(\ast)},\delta,S_{i(\ast)}}^{\ast}$;
let $\delta'$ be the minimal such that $\nu'\restriction\delta'\notin p_{\nu_{i(\ast)},\delta,S_{i(\ast)}}^{\ast}$;
remembering definition \ref{maindef}, necessarily $\nu'\restriction\delta'\in\lim_{\delta'}(p_{\nu_{i(\ast)},\delta',S_{i(\ast)}\cap\delta'}^{\ast})$
and $\nu'\restriction\delta'\in\lim_{\delta'}(r_{\delta'}^{\ast})\setminus(\bigcup\{\lim_{\delta'}(q_{\delta',\eta'}^{\ast}):\eta'\in\Lambda_{\delta'}^{\ast}\})$.
As $\delta'\in S_{i(\ast)}$, there exists $i<i(\ast)$ such that
$\delta'\in S_{q_{i}}$; since for all $\delta''<\delta'$, $\nu'\restriction\delta''\in p_{\nu_{i(\ast)},\delta,S_{i(\ast)}}^{\ast}\subseteq q_{i}$
so $\nu'\restriction\delta'\in\lim_{\delta'}(q_{i})$ and by the construction
of $q_{i}$ as $\nu'\restriction\delta'\in\lim_{\delta'}(r_{\delta'}^{\ast})\setminus(\bigcup\{\lim_{\delta'}(q_{\delta',\eta'}^{\ast}):\eta'\in\Lambda_{\delta'}^{\ast}\})$
it follows that $\nu'\restriction\delta'\notin q_{i}$, a contradiction
to the assumption $\nu'\in p_{i(\ast)}\subseteq q_{i}$.

Finally we have that $p_{\nu_{i(\ast)},\delta,S_{i(\ast)}}^{\ast}=p_{i(\ast)}$,
in addition for all $i<i(\ast)$, $q_{i}\leq_{\mathbb{Q}_{\lambda}}p_{i(\ast)}$
so easily $p_{i(\ast)}$ is the smallest supremum of those conditions.
\end{enumerate}

Finally we can see that player COM has a legal move for each $i<\delta$
thus the forcing $\mathbb{Q}_{\delta}$ is strategically complete
in $\alpha$.
\end{proof}
\end{thm}

By \ref{completenessfornoninaccessibles} and \ref{completenessforinaccessibles}:
\begin{cor}\label{completeness}
For all $\delta\in S_{\ast}\cup\{\lambda\}$,
the forcing $\mathbb{Q}_{\delta}$ is strategically complete in $\mathrm{cf}(\delta)$.
\end{cor}

\begin{thm}\label{chain}
If $\delta\in S_{\ast}\cup\{\lambda\}$, \uline{then} the $\delta^{+}$-chain condition holds for the forcing $\mathbb{Q}_{\delta}$.
\end{thm}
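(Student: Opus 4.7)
The plan is to reduce everything to the compatibility characterization proved in Claim \ref{afterforcing actually}(\ref{what antichain is 2}), namely that $p,q\in\mathbb{Q}_{\delta}$ are compatible iff $tr(p)\in q$ and $tr(q)\in p$. The first observation is that any two conditions in $\mathbb{Q}_{\delta}$ with the same trunk are compatible: if $tr(p)=tr(q)=\eta$, then $\eta\in p$ and $\eta\in q$ trivially, so the criterion is satisfied (alternatively, Claim \ref{intersection} gives an explicit common refinement $p\cap q$ of the form $p^{\ast}_{\eta,\delta,S_{p}\cup S_{q}}$). Consequently, for any antichain $\mathcal{A}\subseteq\mathbb{Q}_{\delta}$, the assignment $p\mapsto tr(p)$ is injective on $\mathcal{A}$.

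Next I would estimate $|\boldsymbol{T}_{<\delta}|$. In both cases of the hypothesis, $\delta$ is a strong limit cardinal: for $\delta=\lambda$ this is clause (1) of Definition \ref{fine structure}, and for $\delta\in S_{\ast}$ this is clause (2). Moreover, by the good structure, every $\theta_{\epsilon}$ for $\epsilon<\delta$ satisfies $\theta_{\epsilon}<\delta$ (if $\delta\in S_{\ast}$ use clause (3) directly; if $\delta=\lambda$ use $\theta_{\epsilon}<\lambda$). Hence for each $\alpha<\delta$, setting $\mu=\sup_{\epsilon<\alpha}\theta_{\epsilon}$, we have $\mu<\delta$ by the regularity of $\delta$, and
\[
|\boldsymbol{T}_{\alpha}|=\Big|\prod_{\epsilon<\alpha}\theta_{\epsilon}\Big|\leq\mu^{|\alpha|}<\delta
\]
by the strong limit property. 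Summing over $\alpha<\delta$ gives $|\boldsymbol{T}_{<\delta}|\leq\delta$.

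Combining the two observations, for every antichain $\mathcal{A}\subseteq\mathbb{Q}_{\delta}$ the injection $p\mapsto tr(p)\in\boldsymbol{T}_{<\delta}$ yields $|\mathcal{A}|\leq|\boldsymbol{T}_{<\delta}|\leq\delta<\delta^{+}$, which is precisely the $\delta^{+}$-c.c.

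There is no real obstacle here: the proof is essentially a one-line consequence of Claim \ref{afterforcing actually}(\ref{what antichain is 2}) together with the routine cardinality count above. The only point worth flagging is that the compatibility characterization genuinely requires the structure of $\mathbb{Q}_{\delta}$ (i.e.\ that conditions are determined by a trunk together with a tenuous set $S$), which is exactly where all the work in the previous claims has been invested.
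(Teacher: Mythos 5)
Your proposal is correct and follows essentially the same route as the paper: the paper also observes (citing Claim \ref{intersection}) that distinct members of an antichain must have distinct trunks, and then bounds $|\boldsymbol{T}_{<\delta}|\leq\delta$ using that $\delta$ is a strong limit with all $\theta_{\epsilon}<\delta$. Your version merely spells out the cardinality count in slightly more detail and routes the compatibility fact through Claim \ref{afterforcing actually}(\ref{what antichain is 2}) instead of Claim \ref{intersection}, which is an immaterial difference.
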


\begin{proof}
Let $\mathcal{A}\subseteq\mathbb{Q}_{\delta}$ be an antichain, then
for all $p,q\in\mathcal{A}$ by Claim \ref{afterforcingactually}(\ref{whatantichainis2}),
$\mathrm{tr}(p)\neq \mathrm{tr}(q)\in\boldsymbol{T}_{\!<\delta}$; recalling the definition
of the good structure $\mathfrak{r}$ it holds that for each $\zeta<\delta$:
$\theta_{\zeta}<\delta$ and as $\delta$ is a strong limit, $\big|\bigcup\limits_{\epsilon<\delta} \prod\limits_{i<\epsilon}\theta_{i} \big| = |\boldsymbol{T}_{\!<\delta}|\leq\delta$;
in particular for any antichain $\mathcal{A}\subseteq\mathbb{Q}_{\delta}$,
$|\mathcal{A}|\leq\delta$.\end{proof}
\begin{cor}
By \ref{completeness} and \ref{chain}, the forcing $\mathbb{Q}_{\lambda}$
is $\leq\lambda$-strategically complete and the $\lambda^{+}$-chain
condition holds for it.\end{cor}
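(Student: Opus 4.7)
The plan is simply to instantiate the two results cited in the statement at $\delta=\lambda$. Since $\lambda\in S_{\ast}\cup\{\lambda\}$ trivially, both Corollary \ref{completeness} and Theorem \ref{chain} are directly applicable to the forcing $\mathbb{Q}_{\lambda}$, and the corollary is essentially just this specialization.

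First I would invoke Corollary \ref{completeness}: because $\lambda$ is inaccessible (hence in particular regular) we have $cf(\lambda)=\lambda$, so the corollary gives that $\mathbb{Q}_{\lambda}$ is strategically complete in $\alpha$ for every $\alpha\leq\lambda$. Unwinding the definition of $\leq\lambda$-strategic completeness (i.e. $\alpha$-strategic completeness for all $\alpha\leq\lambda$, or equivalently, a winning strategy for \emph{COM} in $\Game_{\alpha}(p,\mathbb{Q}_{\lambda})$ for every such $\alpha$ and every $p\in\mathbb{Q}_{\lambda}$), this is exactly the first clause of the corollary.

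Next I would apply Theorem \ref{chain} with $\delta=\lambda$, which directly yields that every antichain in $\mathbb{Q}_{\lambda}$ has size $\leq\lambda$, i.e. the $\lambda^{+}$-chain condition. Combining the two gives both halves of the statement simultaneously.

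There is no real obstacle here: the substantive work occurs in the proofs of Theorems \ref{completeness for non inaccessibles}, \ref{completeness for inaccessibles} (whose combination is Corollary \ref{completeness}) and Theorem \ref{chain}; the only ``step'' in the corollary itself is verifying that the hypotheses are met at $\delta=\lambda$, which is immediate from $\lambda\in S_{\ast}\cup\{\lambda\}$, from $cf(\lambda)=\lambda$, and from the fact that $|\boldsymbol{T}_{<\lambda}|\leq\lambda$ (used inside the proof of Theorem \ref{chain}, which follows because $\lambda=\lambda^{<\lambda}$ is a strong limit and $\theta_{\zeta}<\lambda$ for all $\zeta<\lambda$ by the good-structure assumption).
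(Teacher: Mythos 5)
Your proposal is correct and matches the paper, which offers no separate proof of this corollary precisely because it is the immediate specialization you describe: Corollary \ref{completeness} at $\delta=\lambda$ (using $cf(\lambda)=\lambda$ for the inaccessible $\lambda$) gives strategic completeness in every $\alpha\leq\lambda$, and Theorem \ref{chain} at $\delta=\lambda$ gives the $\lambda^{+}$-chain condition. Your additional remarks on where the real work lies and why $|\boldsymbol{T}_{<\lambda}|\leq\lambda$ are accurate but not needed at this point.
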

\begin{thm}
If $\lambda$ is an inaccessible cardinal, \uline{then} the forcing
$\mathbb{Q}_{\lambda}$ is $\lambda$- bounding.
\begin{proof}
Let $p_{\ast}\in\mathbb{Q}_{\lambda}$ and $\name{\tau}$ a $\mathbb{Q}_{\lambda}$-
name for a function from $\lambda$ to $\lambda$. We would like to
have a condition $q\geq_{\mathbb{Q}_{\lambda}}p_{\ast}$, $q\in\mathbb{Q}_{\lambda}$
and a function $g:\lambda\rightarrow\lambda$ such that $q\Vdash_{\mathbb{Q}_{\lambda}}``\name{\tau}\leq g"$.
In this proof we denote $\leq$ instead of $\leq_{\mathbb{Q}_{\lambda}}$
when comparing forcing conditions.
\begin{itemize}
\item We will find a sequence $\langle p_{\epsilon},S_{\epsilon},E_{\epsilon},\alpha_{\epsilon}\rangle$
for each $\epsilon<\lambda$ such that:\end{itemize}
\begin{enumerate}
\item \label{bounding1}it holds that $p_{0}=p_{\ast}$,
\item \label{bounding2}$p_{\epsilon}=p_{\varrho,\lambda,S_{\epsilon}}^{\ast}$
for $\varrho=\mathrm{tr}(p_{\ast})$,
\item \label{bounding3}the sequence $\langle p_{\zeta}:\zeta\leq\epsilon\rangle$
is increasing and continuous,
\item \label{bounding4}$E_{\epsilon}$ is a club disjoint to $S_{\epsilon}$,
\item \label{bounding5}the sequence $\langle E_{\epsilon}:\epsilon<\lambda\rangle$
is decreasing,
\item \label{bounding6}for $\epsilon=\zeta+1<\lambda$ it holds that $\alpha_{\epsilon}\in E_{\zeta}$
and $\alpha_{\epsilon}\in S_{\ast}\setminus(S_{\zeta}\setminus(\alpha_{\zeta}+1))$,
\item \label{bounding7}for a limit $\epsilon<\lambda$, $\alpha_{\epsilon}\in E_{\epsilon}$,
\item \label{bounding8}the sequence $\langle\alpha_{\zeta}:\zeta\leq\epsilon\rangle$
will be increasing continuous, consisting of ordinals greater than
$\lg(\varrho)$,
\item \label{bounding9}for $\zeta<\epsilon<\lambda$ it holds that $S_{\zeta}\cap(\alpha_{\zeta}+1)=S_{\epsilon}\cap(\alpha_{\zeta}+1)$,
\item \label{bounding11}for $\epsilon=\zeta+1$, the ordinal $\alpha_{\epsilon}$
represents a level, in which in the corresponding tree the value of
the function in $\zeta$ will be determined, that is:

\begin{enumerate}
\item \label{bounding11(a)}for all $\nu\in p_{\epsilon}\cap\boldsymbol{T}_{\alpha_{\epsilon}}$
it holds that $p_{\epsilon}^{[\nu]}$ forces a value for $\name{\tau}(\zeta)$,
\item \label{bounding11(b)}it holds that $p_{\epsilon}\Vdash_{\mathbb{Q}_{\lambda}}``\name{\tau}(\zeta)\in u_{\zeta}"$
where $u_{\zeta}\subseteq\lambda$ of cardinality $<\lambda$.
\end{enumerate}
\end{enumerate}
\begin{itemize}
\item Next we see that this construction is possible, by induction:

\begin{itemize}
\item For the basis $\epsilon=0$:

We have that $p_{0}=p_{\ast}$, $\alpha_{0}=\lg(\varrho)$ so \ref{bounding1}
holds; $S_{\epsilon}$ is the tenuous set corresponding to $p$ and
let $E_{\epsilon}$ be a club in $\lambda$ disjoint to $S_{\epsilon}$
(as $S_{\epsilon}$ is tenuous).

\item For $\epsilon<\lambda$ limit:

Start with the set $S_{\epsilon}$: let $S_{\epsilon}=\underset{\zeta<\epsilon}{\bigcup}S_{\zeta}\subseteq S_{\ast}$.
Then it is easy to see that clause \ref{bounding9} holds (by the
induction hypothesis); let also $\alpha_{\epsilon}=\underset{\zeta<\epsilon}{\bigcup}\alpha_{\zeta}$
and $E_{\epsilon}=\underset{\zeta<\epsilon}{\bigcap}E_{\zeta}$ ,
observe that $E_{\epsilon}$ is a club disjoint to $S_{\epsilon}$,
so clauses \ref{bounding4} and \ref{bounding5} hold.

Now we will show that the set $S_{\epsilon}$ is indeed tenuous: first,
the set $S_{\epsilon}$ is non-stationary in $\lambda$ as a union
of $\epsilon<\lambda=\mathrm{cf}(\lambda)$ sets that are non-stationary in
$\lambda$ and by Remark \ref{stationarynotreflecting} when $S_{\ast}$
is non-reflecting, $S_{\epsilon}$ is also tenuous, but we have to
prove it in general.

Next, let $\gamma<\lambda$ be an ordinal of uncountable cofinality
and look at $S_{\epsilon}\restriction\gamma$:

If there exists $\zeta<\epsilon$ for which $\gamma<\alpha_{\zeta}$
then as $S_{\epsilon}\cap(\alpha_{\zeta}+1)=S_{\zeta}\cap(\alpha_{\zeta}+1)$
it follows that $S_{\epsilon}\cap\gamma=S_{\zeta}\cap\gamma$ and
since $S_{\zeta}$ is tenuous this set is non-stationary.

For $\gamma=\alpha_{\epsilon}$, first observe that by the definition
of $E_{\epsilon}$ as the limit of the clubs $\langle E_{\zeta}:\zeta<\epsilon\rangle$
and since the sequence of clubs is decreasing, and by \ref{bounding6}
of the induction hypothesis it holds that $\alpha_{\epsilon}\in\underset{\zeta<\epsilon}{\bigcap}E_{\zeta}=E_{\epsilon}$,
this was clause \ref{bounding7}, and so $\alpha_{\epsilon}\notin S_{\epsilon}$.
\begin{itemize}
\item When $\alpha_{\epsilon}$ is regular (and thus inaccessible): by \ref{bounding8}
in the induction hypothesis, the set $\{\alpha_{\zeta}:\mbox{\ensuremath{\zeta}}\mbox{ is a limit ordinal }<\epsilon\}$
is a club of $\alpha_{\epsilon}$, in addition by clause \ref{bounding7}
in the induction hypothesis, for all $\zeta<\epsilon$ limit: $\alpha_{\zeta}\notin S_{\zeta}$
and by clause \ref{bounding9} in the induction hypothesis, for every
$\zeta<\xi<\epsilon$ it holds that $\alpha_{\zeta}\notin S_{\xi}$
and therefore $\alpha_{\zeta}\notin S_{\epsilon}$ and this club is
disjoint to $S_{\epsilon}\restriction\alpha_{\epsilon}$, so this
is not a stationary set.
\item When $\alpha_{\epsilon}$ is singular, the set $S_{\ast}$ doesn't
reflect to $\alpha_{\epsilon}$ by definition, so $S_{\ast}\restriction\alpha_{\epsilon}$
is a non-stationary set, and in particular $S_{\epsilon}\restriction\alpha_{\epsilon}\subseteq S_{\ast}\restriction\alpha_{\epsilon}$
is not a stationary set by \ref{stationarynotreflecting}.
\end{itemize}

Lastly for $\gamma>\alpha_{\epsilon}$:
\begin{itemize}
\item If $\mathrm{cf}(\gamma)>\epsilon$ then for all $\zeta<\epsilon$ it holds
that $S_{\zeta}\restriction\gamma$ is a non-stationary set from clause
\ref{bounding2} of the induction hypothesis, so there is a club of
$\gamma$ disjoint to it, call it $C_{\zeta}$. Letting $C_{\epsilon}=\underset{\zeta<\epsilon}{\bigcap}C_{\zeta}$,
this is a club as the intersection of $\epsilon$ clubs, disjoint
to $S_{\epsilon}$ by its definition, so $S_{\epsilon}\restriction\gamma$
is non-stationary.
\item Otherwise, if $\gamma>\epsilon\geq \mathrm{cf}(\gamma)$ in particular it follows
that $\gamma$ is singular, thus $S_{\ast}$ doesn't reflect to $\gamma$
and so also $S_{\epsilon}\subseteq S_{\ast}$ using Remark \ref{stationarynotreflecting}.
\end{itemize}

Let $p_{\epsilon}=p_{\varrho,\lambda,S_{\epsilon}}^{\ast}$ so clauses
\ref{bounding2} hold. Moreover, $p_{\epsilon}\subseteq\underset{\zeta<\epsilon}{\bigcap}p_{\varrho,\lambda,S_{\zeta}}^{\ast}$,
why? assume there is some $\nu'\in\underset{\zeta<\epsilon}{\bigcap}p_{\varrho,\lambda,S_{\zeta}}^{\ast}\setminus p_{\epsilon}$,
as $\varrho\trianglelefteq\nu'$ there is some minimal $\delta'$
for which $\nu'\restriction\delta'\notin p_{\epsilon}$; then $\nu'\restriction\delta'\in\lim_{\delta'}(p_{\epsilon}\cap\boldsymbol{T}_{<\delta'})$
and by definition \ref{maindef} neccessarily $\nu'\restriction\delta'\in\lim_{\delta'}(r_{\delta'}^{\ast})\setminus(\bigcup\{\lim_{\delta'}(q_{\delta',\eta'}^{\ast}):\eta'\in\Lambda_{\delta'}^{\ast}\})$.
Since for some $\zeta<\epsilon$, $\delta'\in S_{\zeta}$, it follows
that $\nu'\restriction\delta'\notin p_{\varrho,\lambda,S_{\zeta}}^{\ast}$
and so $\nu'\notin p_{\varrho,\lambda,S_{\zeta}}^{\ast}$, a contradiction.
Thus, $p_{\epsilon}=\underset{\zeta<\epsilon}{\bigcap}p_{\varrho,\lambda,S_{\zeta}}^{\ast}$
and \ref{bounding3} hold.

\item For $\epsilon=\zeta+1$:

This is the main case, as here we deal with clause \ref{bounding11}
that is responsible for determining the values of the function.

Define the following set:
\[
\mathscr{J}_{\epsilon}=\{r\in\mathbb{Q}_{\lambda}:r\mbox{ forces a value on }\name{\tau}(\zeta)\wedge p_{\zeta}\leq_{\mathbb{Q}_{\lambda}}r\wedge\lg(\mathrm{tr}(r))>\alpha_{\zeta}\}
\]
and observe:

(a) This set is dense above $p_{\zeta}$: for all $p\in\mathbb{Q}_{\lambda}$
with $p_{\zeta}\leq p$, we will find a condition $r$ stronger than
$p$ that forces a value on $\name{\tau}(\zeta)$ and if $\lg(\mathrm{tr}(r))>\alpha_{\zeta}$
doesn't hold, we can extend $r$ to a stronger condition with long
enough trunk.

(b) The set is open: for all $q\in\mathscr{J}_{\epsilon}$ and $r\geq q$,
$q$ forces a value on $\name{\tau}(\zeta)$ and therefore, so does
$r$, $\lg(\mathrm{tr}(r))\geq\lg(\mathrm{tr}(q))>\alpha_{\zeta}$ and of course that
$p_{\zeta}\leq q\leq r$ .

Now define a set $\Lambda_{\epsilon}=\{\mathrm{tr}(r):r\in\mathscr{J}_{\epsilon}\}$
and for every $\eta\in\Lambda_{\epsilon}$ choose some $q_{\epsilon,\eta}\in\{r\in\mathscr{J}_{\epsilon}:\mathrm{tr}(r)=\eta\}$.

Choose a set $\Lambda_{\epsilon}^{1}\subseteq\Lambda_{\epsilon}$
that is maximal under the restriction that for any different $\eta,\nu\in\Lambda_{\epsilon}^{1}$,
$\nu\notin q_{\epsilon,\eta}\vee\eta\notin q_{\epsilon,\nu}$ ; let
$\bar{q}_{\epsilon}=\langle q_{\epsilon,\eta}:\eta\in\Lambda_{\epsilon}^{1}\rangle$.
\begin{itemize}
\item Observe that the sequence $\bar{q}_{\epsilon}=\langle q_{\epsilon,\eta}:\eta\in\Lambda_{\epsilon}^{1}\rangle\in\Xi_{\lambda}$
because:

(1) $\Lambda_{\epsilon}^{1}\subseteq\boldsymbol{T}_{<\lambda}$,

(2) for all $\eta\in\Lambda_{\epsilon}^{1}$ it holds that $q_{\epsilon,\eta}\in\mathbb{Q}_{\lambda}\subseteq\mathbb{Q}_{\lambda}^{0}$
and $\mathrm{tr}(q_{\epsilon,\eta})=\eta$,

(3) if $\eta,\nu\in\Lambda_{\epsilon}^{1}$ are different, then by
the definition of $\Lambda_{\epsilon}^{1}$ it holds that $\mathrm{tr}(q_{\epsilon,\nu})=\nu\notin q_{\epsilon,\eta}\vee \mathrm{tr}(q_{\epsilon,\eta})=\eta\notin q_{\epsilon,\nu}$,

(4) \label{(4)r=00003Dp}it holds that $r_{\bar{q}_{\epsilon}}^{\ast}=\{\rho\in\boldsymbol{T}_{<\lambda}:(\exists\eta\in\Lambda_{\epsilon}^{1})(\rho\in q_{\epsilon,\eta})\}=p_{\zeta}$,
in particular it belongs to $\mathbb{Q}_{\lambda}\subseteq\mathbb{Q}_{\lambda}^{0}$;
observe that for all $\eta\in\Lambda_{\epsilon}^{1}$, it holds that
$q_{\epsilon,\eta}\subseteq p_{\zeta}$ and so $r_{\bar{q}_{\epsilon}}^{\ast}\subseteq p_{\zeta}$.
Assume via contradiction that $\nu\in p_{\zeta}\setminus r_{\bar{q}_{\epsilon}}^{\ast}$
then there is $p_{\zeta}^{[\nu]}\leq_{\mathbb{Q}_{\lambda}}q$ that
forces a value for $\name{\tau}(\zeta)$ and its trunk is longer
than $\alpha_{\zeta}$, so $q\in\mathscr{J}_{\epsilon}$ and $\mathrm{tr}(q)\in\Lambda_{\epsilon}$.
If $\mathrm{tr}(q)\in\Lambda_{\epsilon}^{1}$ we get $\mathrm{tr}(q)\in r_{\bar{q}_{\epsilon}}^{\ast}$,
a contradiction to the assumption; hence there is $\nu'\in\Lambda_{\epsilon}^{1}$
such that $\mathrm{tr}(q)\in q_{\epsilon,\nu'}\vee \mathrm{tr}(q_{\epsilon,\nu'})\in q$
so again we get $\mathrm{tr}(q)\in r_{\bar{q}_{\epsilon}}^{\ast}$ but the
later conjunct contradict the choice of $\nu$ and $\nu\in r_{\bar{q}_{\epsilon}}^{\ast}$,
a contradiction.

\end{itemize}

For all $\eta\in\Lambda_{\epsilon}^{1}$ it holds that $q_{\epsilon,\eta}$
forces a value on $\name{\tau}(\zeta)$; call this value $\gamma_{\epsilon,\eta}$.
In addition let $C_{\eta}$ be a club disjoint to $S_{q_{\epsilon,\eta}}$.

\textbf{First, define an approximation for the club $E_{\epsilon}$.}

$E_{\epsilon}'=\{\delta\in E_{\zeta}:\delta>\alpha_{\zeta}\mbox{ is a limit ordinal such that }\nu'\in\Lambda_{\epsilon}^{1}\cap\boldsymbol{T}_{\!<\delta}\rightarrow\delta\in C_{\nu'}\mbox{ and }\nu\in p_{\zeta}\cap\boldsymbol{T}_{\!<\delta}\rightarrow\nu\in q_{\epsilon,\eta}\mbox{ for some }\eta\in\boldsymbol{T}_{\!<\delta}\cap\Lambda_{\epsilon}^{1})\}$

The set $E_{\epsilon}'$ is a club in $\lambda$:
\begin{itemize}
\item \uline{Closed}- for every increasing sequence of ordinals $\langle\delta_{i}:i<\zeta^{\ast}\rangle$
such that for all $i<\zeta^{\ast}$: $\delta_{i}\in E_{\epsilon}'$
and $\zeta^{\ast}<\lambda$, their limit $\delta=\underset{i<\zeta^{\ast}}{\lim}\delta_{i}$
is of course a limit ordinal and belongs to $E_{\zeta}$. In addition,
for all $\nu'\in\Lambda_{\epsilon}^{1}$ with $\lg(\nu')<\delta$
there is $j_{0}<\zeta^{\ast}$ such that for all $j_{0}<j<\zeta^{\ast}$
it holds that $\lg(\nu')<\delta_{j}$ (as $\delta$ is defined to
be the limit of those), then $\delta_{j}\in C_{\nu'}$ and since $C_{\nu'}$
is a club it follows that $\delta\in C_{\nu'}$, as the limit of $\langle\delta_{j}:j_{0}<j<\zeta^{\ast}\rangle$.

Lastly if $\nu\in p_{\zeta}\cap\boldsymbol{T}_{\!<\delta}$ then $\lg(\nu)<\delta$
hence for some $i<\zeta^{\ast}$, $\lg(\nu)<\delta_{i}$ hence $\nu\in\boldsymbol{T}_{<\delta_{i}}$
hence $\nu\in p_{\zeta}\cap\boldsymbol{T}_{\!<\delta}$. As $\delta_{i}\in E_{\epsilon}'$
necessarily there is $\eta\in\boldsymbol{T}_{<\delta_{i}}\cap\Lambda_{\epsilon}^{1}$
such that $\nu\in q_{\epsilon,\eta}$ but clearly $\eta\in\boldsymbol{T}_{\!<\delta}\cap\Lambda_{\epsilon}^{1}$
so we are done.

\item \uline{Unbounded}-\textbf{\textit{ }}otherwise, the set $E_{\epsilon}'$
was bounded by some $\xi<\lambda$; then for every limit $\xi<\delta\in E_{\zeta}$,
$\delta\notin E_{\epsilon}'$ so \uline{either} \textit{(1)} $\exists\nu'\in\Lambda_{\epsilon}^{1}\cap\boldsymbol{T}_{\!<\delta}$
such that $\delta\notin C_{\nu'}$ \uline{o}\textit{\uline{r}}\textit{
(2) $(\exists\nu\in p_{\zeta}\cap\boldsymbol{T}_{\!<\delta})(\forall\eta\in\boldsymbol{T}_{\!<\delta}\cap\Lambda_{\epsilon}^{1})(\nu\notin q_{\epsilon,\eta})$}.
As $E_{\zeta}\setminus(\xi+1)$ is stationary, for some $W\subseteq E_{\zeta}\setminus(\xi+1)$
stationary in $\lambda$, for all $\delta\in W$ the same case occurs.
If it is by case \textit{(2)}, as $|\boldsymbol{T}_{<\alpha}|<\lambda$
for $\alpha<\lambda$, by Fodor's lemma there is a stationary set
$W_{2}\subseteq E_{\zeta}\setminus(\xi+1)$ such that for all $\delta\in W_{2}$
we can choose the same $\nu\in p_{\zeta}\cap\boldsymbol{T}_{\!<\delta}$-
a contradiction. Thus \textit{(2)} is impossible and if it is by case\textit{
(1)} so $$\delta \notin \bigcap\limits_{\nu'\in\Lambda_{\epsilon}^{1}\cap\boldsymbol{T}_{\!<\delta}} C_{\nu'} \subseteq \bigcap\limits_{\nu'\in\Lambda_{\epsilon}^{1}\cap\boldsymbol{T}_{\!<\xi}} C_{\nu'}=C$$
as $|\Lambda_{\epsilon}^{1}\cap\boldsymbol{T}_{\!<\delta}|<\lambda$.
For any $\xi<\delta\in E_{\zeta}$ we get 
$$\{\delta\in(\xi,\lambda)\cap E_{\zeta}:\delta\mbox{ is a limit ordinal}\}\cap C=\varnothing;$$
however, $C$ is a club as it is the intersection of less than $\lambda$
clubs --- a contradiction.
\end{itemize}

\textbf{Define the level.}

We would like to have an ordinal $\delta$ for which the following
properties hold:
\begin{enumerate}
\item [(a)]\setcounter{enumi}{0}\label{level1}$\delta\in E_{\epsilon}'\cap S_{\ast}$,
\item [(b)]\setcounter{enumi}{1}\label{level2}$\alpha_{\zeta}<\delta$
(follows from (a)),
\item [(c)]\setcounter{enumi}{2}\label{level3}$r_{\delta}^{\ast}=p_{\zeta}\cap\boldsymbol{T}_{\!<\delta}$,
\item [(d)]\setcounter{enumi}{3}\label{level4}It holds that $\bar{q}_{\delta}^{\ast}=\langle q_{\epsilon,\eta}\cap\boldsymbol{T}_{\!<\delta}:\eta\in\Lambda_{\epsilon}^{1}\cap\boldsymbol{T}_{\!<\delta}\rangle$.
\end{enumerate}

An ordinal with those properties exists:

First, by Claim \ref{useanti} there is a stationary set of $\delta\in S_{\ast}$
such that clause d. holds for and call it $S^{+}$; as $E_{\epsilon}'$
is a club, we get that $S^{+}\cap E_{\epsilon}'$ is stationary. Observe
that for all $\delta\in S^{+}\cap E_{\epsilon}'$ from clause d. it
follows that $r_{\delta}^{\ast} = \bigcup\limits_{\eta\in\Lambda_{\delta}^{\ast}}q_{\eta}^{\ast} = \bigcup\limits_{\nu\in\Lambda_{\epsilon}^{1}\cap\boldsymbol{T}_{\!<\delta}} q_{\epsilon,\nu}\cap\boldsymbol{T}_{\!<\delta}$,
in addition by the definition of $E_{\epsilon}'$ it holds that $p_{\zeta}\cap\boldsymbol{T}_{\!<\delta}=\underset{\nu\in\Lambda_{\epsilon}^{1}\cap\boldsymbol{T}_{\!<\delta}}{\bigcup}q_{\epsilon,\nu}\cap\boldsymbol{T}_{\!<\delta}$,
so for all $\delta\in S^{+}\cap E_{\epsilon}'$ clause c. holds, as
this set is not empty (as a stationary set) there is such $\delta$,
and we are done.

\uline{Let $\alpha_{\epsilon}=\delta$.} Observe that in particular
it follow that $\Lambda_{\epsilon}^{1}\cap\boldsymbol{T}_{<\alpha_{\epsilon}}=\Lambda_{\alpha_{\epsilon}}^{\ast}$.

We can now let $E_{\epsilon}=E_{\epsilon}'\setminus(\alpha_{\epsilon}+1)$,
notice that also $E_{\epsilon}$ is a club in $\lambda$.

\textbf{Define the tenuous set of $p_{\epsilon}$.}

First in the $\alpha_{\epsilon}$-th level we define the set of all
the limits formed from the conditions of $\bar{q}_{\alpha_{\epsilon}}^{\ast}$:
\[
\Lambda_{\epsilon}^{2} = p_{\zeta} \cap \boldsymbol{T}_{\alpha_{\epsilon}} \cap \Big( \bigcup\{\lim(q_{\alpha_{\epsilon},\nu}^{\ast}):\nu\in\Lambda_{\alpha_{\epsilon}}^{\ast}\} \Big)
\]
For $\eta\in\Lambda_{\epsilon}^{2}$, by the definition above and
the definition of the level there is unique $\nu\in\Lambda_{\alpha_{\epsilon}}^{\ast}$
with $\eta\in\lim(q_{\alpha_{\epsilon},\nu}^{\ast})$, as $q_{\epsilon,\nu}\cap\boldsymbol{T}_{<\alpha_{\epsilon}}=q_{\alpha_{\epsilon},\nu}^{\ast}$
and recalling definition \ref{maindef}, the fact that $\eta\in\lim(q_{\alpha_{\epsilon},\nu}^{\ast})$
also implies $\eta\in q_{\epsilon,\nu}$; let $r_{\eta}:=(q_{\epsilon,\nu})^{[\eta]}$.

Now, define $S_{\epsilon}^{1}=\cup\{S_{r_{\eta}}\setminus(\alpha_{\epsilon}+1):\eta\in\Lambda_{\epsilon}^{2}\}$.
Observe that for every $\eta\in\Lambda_{\epsilon}^{2}$, $S_{r_{\eta}}\subseteq S_{q_{\epsilon,\nu}}$
for some $\nu\in\Lambda_{\alpha_{\epsilon}}^{\ast}\subseteq\boldsymbol{T}_{<\alpha_{\epsilon}}$
(follows from $r_{\eta}=(q_{\epsilon,\nu})^{[\eta]}$ and \ref{afterforcingactually}.(\ref{enu:afterforcingactually2})).
Thus $S_{\epsilon}^{1}\subseteq\cup\{S_{q_{\epsilon,\nu}}:\nu\in\Lambda_{\alpha_{\epsilon}}^{\ast}\}$
and this is a union of $\leq|\boldsymbol{T}_{<\alpha_{\epsilon}}|\leq\alpha_{\epsilon}$
sets, each one is a tenuous subset of $S_{\ast}\setminus(\alpha_{\epsilon}+1)$
and in particular non-stationary in $\lambda$. So their union will
be the union of $\leq\alpha_{\epsilon}<\lambda$ (as $\lambda$ is
inaccessible) non-stationary sets, and as $\lambda=\mathrm{cf}(\lambda)$ and
by Claim \ref{stationaryunions} it follows that $S_{\epsilon}^{1}$
is a non-stationary subset of $\lambda$.

Next, let $\alpha_{\epsilon}<\delta<\lambda$:
\begin{itemize}
\item If $\delta$ is an inaccessible cardinal in $S_{\ast}$, we want to
show that $S_{\epsilon}^{1}\restriction\delta$ is non-stationary
in $\delta$: as $2^{\alpha_{\epsilon}}<\delta$ (by inaccessibility
of $\delta$) and since for all $\eta\in\Lambda_{\epsilon}^{2}$ the
set $S_{r_{\eta}}$ is tenuous, in particular $S_{r_{\eta}}\restriction\delta$
is non-stationary so $S_{\epsilon}^{1}$ is the union of $<\delta=\mathrm{cf}(\delta)$
non-stationary set and by Claim \ref{stationaryunions} it is not
stationary.
\item Else, in particular $S_{\ast}$ does not reflect to $\delta$, then
the set $S_{\ast}\restriction\delta$ is non-stationary in $\delta$
and so also in $S_{\epsilon}^{1}\restriction\delta$ by \ref{stationarynotreflecting}.
\end{itemize}

This shows that $S_{\epsilon}^{1}$ is tenuous and therefore $S_{\epsilon}=S_{\zeta}\cup\{\alpha_{\epsilon}\}\cup S_{\epsilon}^{1}$
that is also tenuous.

Moreover we can see that $E_{\epsilon}$ is disjoint to $S_{\zeta}\cup\{\alpha_{\epsilon}\}$
as a subset of $E_{\zeta}\setminus(\alpha_{\epsilon}+1)$ and by the
induction hypothesis; in addition for all $\delta\in E_{\epsilon}$,
$\delta\in\underset{\nu'\in\Lambda_{\epsilon}^{1}\cap\boldsymbol{T}_{\!<\delta}}{\bigcap}C_{\nu'}$.
For all $\eta\in\Lambda_{\epsilon}^{2}$ it holds that $S_{r_{\eta}}\subseteq S_{q_{\epsilon,\nu}}$
for some $\nu\in\Lambda_{\epsilon}^{1}\cap\boldsymbol{T}_{<\alpha_{\epsilon}}\subseteq\Lambda_{\epsilon}^{1}\cap\boldsymbol{T}_{\!<\delta}$,
so the set $C_{\nu}$ is disjoint to $S_{r_{\eta}}$ and in particular
$\delta\notin S_{r_{\eta}}$. Finally we have that $S_{\epsilon}\cap E_{\epsilon}=\varnothing$.

\textbf{Define the condition.}

The condition will be $p_{\epsilon}=p_{\varrho,\lambda,S_{\epsilon}}^{\ast}$
and so $p_{\epsilon}\in\mathbb{Q}_{\lambda};$ we would like $p_{\epsilon}\subseteq p_{\zeta}$
to hold, for the condition to be stronger than in the previous level;
this is formed as we are using a larger tenuous set than the one of
$p_{\zeta}$.

\uline{Statement:} For all $\rho\in p_{\zeta}$, $\rho\in p_{\epsilon}$
if and only if ($\lg(\rho)<\alpha_{\epsilon}$) or ($\alpha_{\epsilon}\leq\lg(\rho)$
and $(\forall\eta\in\Lambda_{\epsilon}^{2})(\rho\in r_{\eta})$).

\uline{Proof:}
\begin{enumerate}
\item If $\rho\in p_{\epsilon}$ then either (a) $\lg(\rho)<\alpha_{\epsilon}$
or (b) $\alpha_{\epsilon}\leq\lg(\rho)$. In (b), let $\eta\in\Lambda_{\epsilon}^{2}$
and supposse $\delta_{1}$ is minimal such that $\rho\restriction\delta_{1}\notin r_{\eta}$
so $\delta_{1}\in S_{r_{\eta}}$, in which case $\delta_{1}$ is successful
and $\rho\restriction\delta_{1}\in\lim_{\delta_{1}}(r_{\delta_{1}}^{\ast})\setminus(\bigcup\{\lim_{\delta_{1}}(q_{\delta_{1},\eta'}^{\ast}):\eta'\in\Lambda_{\delta_{1}}^{\ast}\})$.
Thus $\rho\restriction\delta_{1}\notin p_{\epsilon}\Rightarrow\rho\notin p_{\epsilon}$-
a contradiction.

It holds then that $\alpha_{\epsilon}\leq\lg(\rho)\rightarrow(\forall\eta\in\Lambda_{\epsilon}^{2})(\rho\in r_{\eta})$.

\item For the other direction, if $\rho$ is such that $\lg(\rho)<\alpha_{\epsilon}$,
$\rho\in p_{\zeta}$ and if $\alpha_{\epsilon}\leq\lg(\rho)$, let
$\rho\restriction\alpha_{\epsilon}=:\eta$ then $\eta\in\Lambda_{\epsilon}^{2}\wedge\rho\in r_{\eta}$.
If $\rho\notin p_{\epsilon}$, for some $\lg(\varrho)<\delta_{1}\in S_{\epsilon}$,
$$\rho\restriction\delta_{1}\in\lim_{\delta_{1}}(r_{\delta_{1}}^{\ast})\setminus \Big(\bigcup\{\lim_{\delta_{1}}(q_{\delta_{1},\eta'}^{\ast}):\eta'\in\Lambda_{\delta_{1}}^{\ast}\} \Big).$$

\begin{enumerate}
\item If $\delta_{1}<\alpha_{\epsilon}$ then $\delta_{1}\in S_{\zeta}$
and $\rho\restriction\delta_{1}\notin p_{\zeta}$- a contradiction.
\item If $\delta_{1}>\alpha_{\epsilon}$ then $\delta_{1}\in S_{\epsilon}^{1}$
so for some $\eta'\in\Lambda_{\epsilon}^{2}$, $\delta_{1}\in S_{r_{\eta'}}$.
and $\rho\restriction\delta_{1}\notin r_{\eta'}$- a contradiction.
\item If $\delta_{1}=\alpha_{\epsilon}$ we have $\rho\in r_{\eta}=(q_{\alpha_{\epsilon},\nu}^{\ast})^{[\eta]}$-
a contradiction.
\end{enumerate}

We are done with the statement.

Now observe:
\begin{itemize}
\item We can easily verify that $p_{\zeta}\leq_{\mathbb{Q}_{\lambda}}p_{\epsilon}$.
\item The set $\{r_{\eta}:\eta\in\Lambda_{\epsilon}^{2}\}$ is predense
above $p_{\epsilon}$ in $\mathbb{Q}_{\lambda}$: let $p_{\epsilon}\leq q$,
assume there are no forcing conditions in $\{q\cap r_{\eta}:\eta\in\Lambda_{\epsilon}^{2}\}$;
recall that $\rho\in p_{\epsilon}\Leftrightarrow\rho\in\bigcap\{r_{\eta}:\eta\in\Lambda_{\epsilon}^{2}\}$,
then $q=q\cap p_{\epsilon}=\bigcap\{q\cap r_{\eta}:\eta\in\Lambda_{\epsilon}^{2}\}$-
a contradiction, as the right side cannot be a condition.
\item as in fact the pruning had been to get $p_{\epsilon}$ exactly by
this set.
\item Thus, as for all $\eta\in\Lambda_{\epsilon}^{2}$ it holds that $r_{\eta}\Vdash\name{\tau}(\zeta)=\gamma_{\epsilon,\nu_{\eta}}$
for some $\nu_{\eta}\trianglelefteq\eta$, we can write $u_{\zeta}=\{\gamma_{\epsilon,\nu_{\eta}}:\eta\in\Lambda_{\epsilon}^{2}\}$
and have $p_{\epsilon}\Vdash"\name{\tau}(\zeta)\in u_{\zeta}"$.
\end{itemize}

Clause \ref{bounding11} holds and so the construction is possible.

\end{enumerate}
\item Let $S'=\underset{\epsilon<\lambda}{\bigcup}S_{\epsilon}$, this is
non-stationary because $\underset{\epsilon<\lambda}{\Delta}E_{\epsilon}\cap S'=\varnothing$
and a tenuous set as for all $\delta<\lambda$ there is $\epsilon<\lambda$
with $S'\cap\delta=S_{\epsilon}\cap\delta$ (by clause \ref{bounding9}).
\item Lastly, let $q=p_{\varrho,\lambda,S'}^{\ast}$ then indeed $p\leq q$
and we can define $g:\lambda\rightarrow\lambda$ by: for $\epsilon<\lambda$
let $g(\epsilon)=\sup\{u_{\epsilon}\}$ where $u_{\epsilon}$ is from
clause (\ref{bounding11(b)}) in our induction, so as $u_{\zeta}$
is a subset of $\lambda$ of cardinality $<\lambda$, clearly $g(\epsilon)<\lambda$
indeed. So $g$ is a function from $\lambda$ into $\lambda$ which
belongs to $\boldsymbol{V}$. Also by clause (\ref{bounding11(b)})
we have $p_{\epsilon+1}\Vdash``\name{\tau}(\epsilon)\in u_{\epsilon}"$
hence $p_{\epsilon+1}\Vdash``\name{\tau}(\epsilon)\leq g(\epsilon)"$.
But $q$ is above $p_{\epsilon+1}$ for every $\epsilon<\lambda$
hence $q\Vdash``\name{\tau}(\epsilon)\leq g(\epsilon)"$.
\end{itemize}

As $p$ is stronger then our original $p$ we are done proving the
theorem.

\end{itemize}
\end{proof}
\end{thm}
\begin{cor}
The forcing $\mathbb{Q}_{\lambda}$ resembles Random Real forcing
for $\lambda$.
\end{cor}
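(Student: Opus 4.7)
The plan is to verify each of the four clauses of Definition~\ref{def:A-forcing-resembling} in turn for $\mathbb{Q}_{\lambda}$, appealing to the properties already established earlier in this section. Since $\lambda$ is inaccessible we have $\mathrm{cf}(\lambda)=\lambda$, so everything that was proved uniformly for ``$\delta\in S_{\ast}\cup\{\lambda\}$'' specializes cleanly to $\delta=\lambda$.

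First I would verify non-triviality together with the chain condition. For non-triviality, $\theta_{0}\geq 2$ gives two incompatible one-step successors of the empty sequence in $\boldsymbol{T}_{<\lambda}$, and the two conditions $(\boldsymbol{T}_{<\lambda})^{[\langle 0\rangle]}$ and $(\boldsymbol{T}_{<\lambda})^{[\langle 1\rangle]}$ lie in $\mathbb{Q}_{\lambda}$ by Claim~\ref{afterforcing actually}; they are incompatible by clause~(\ref{what antichain is 2}) of that same claim. The $\lambda^{+}$-chain condition is then Theorem~\ref{chain} applied to $\delta=\lambda$. Next, strategic $<\lambda$-completeness is immediate from Corollary~\ref{completeness} with $\delta=\lambda$, because the corollary asserts strategic completeness in every $\alpha\leq\mathrm{cf}(\delta)$. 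Finally, $\lambda$-boundingness is precisely the content of the theorem immediately preceding this corollary.

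The last clause, that $\mathbb{Q}_{\lambda}$ adds no $\lambda$-Cohen real, is flagged in the definition itself as a consequence of $\lambda$-boundingness: a $\lambda$-Cohen real would produce a function in ${}^{\lambda}\lambda$ that is not eventually dominated (indeed not even dominated modulo bounded) by any ground-model function, in direct contradiction with $\lambda$-boundingness. So no separate new argument is needed. The corollary is therefore obtained by simply collecting the four ingredients, and there is no real obstacle to overcome here; the corollary is essentially a summary statement, and the genuine work lay in the earlier theorems, particularly in the delicate diamond-based construction used in the proof of $\lambda$-boundingness.
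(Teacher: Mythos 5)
Your proposal is correct and follows the same route the paper intends: the corollary is a summary obtained by collecting Theorem~\ref{chain} (non-triviality and the $\lambda^{+}$-chain condition), Corollary~\ref{completeness} (strategic $<\lambda$-completeness, since $\mathrm{cf}(\lambda)=\lambda$), the preceding theorem ($\lambda$-bounding), and the observation built into Definition~\ref{def:A-forcing-resembling} that not adding $\lambda$-Cohen reals follows from bounding. The paper leaves this assembly implicit, and your explicit verification, including the small non-triviality check via the two incompatible conditions $(\boldsymbol{T}_{<\lambda})^{[\langle 0\rangle]}$ and $(\boldsymbol{T}_{<\lambda})^{[\langle 1\rangle]}$, is accurate.
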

\selectlanguage{british}%
\begin{onehalfspace}
{\large{}\newpage{}}{\large \par}
\end{onehalfspace}
\selectlanguage{english}%

\end{document}